\documentclass[12pt,oneside,a4paper,reqno]{amsart}

\usepackage{amssymb}
\usepackage{amsmath}
\usepackage{amsthm}
\usepackage{amscd}
\usepackage[all]{xy}
\usepackage{longtable}
\usepackage{mathrsfs}
\usepackage[dvipdfmx]{xcolor}
\usepackage[dvipdfmx]{pict2e}
\usepackage[dvipdfmx]{graphicx}

\usepackage{comment}
\usepackage{todonotes}

\usepackage[dvipdfmx]{hyperref}
\usepackage{hyperref}
\hypersetup{
	colorlinks=true,
	linkcolor=red,
	citecolor=blue}
	\usepackage[utf8]{inputenc}
\usepackage[T1]{fontenc}
\usepackage{color}

\usepackage{extarrows}
\usepackage{tikz}
\usetikzlibrary{cd}

\usepackage{geometry}
\theoremstyle{plain}
\newtheorem{theorem}{Theorem}[section]

\newtheorem{corollary}[theorem]{Corollary}
\newtheorem{proposition}[theorem]{Proposition}
\newtheorem{remark}[theorem]{Remark}

\theoremstyle{definition}
\newtheorem{definition}[theorem]{Definition}

\newtheorem{conjecture}[theorem]{Conjecture}

\newtheorem{remark-theorem}[theorem]{Remark-Theorem}
\newtheorem{counterexample}[theorem]{Counterexample}

\newcommand{\kah}{K\"{a}hler }
\newcommand{\idd}{i\partial\overline{\partial}}
\newcommand{\dbar}{\overline{\partial}}

\newcommand{\cal}[1]{\mathcal{#1}}
\newcommand{\bb}[1]{\mathbb{#1}}
\newcommand{\scr}[1]{\mathscr{#1}}
\newcommand{\rom}[1]{\mathrm{#1}}

\newcommand{\xs}{X_{sing}}
\newcommand{\reg}{X_{reg}}

\newcommand{\tx}{\widetilde{X}}
\newcommand{\hx}{\widehat{X}}

\newcommand{\tl}[1]{\widetilde{#1}}
\newcommand{\wh}[1]{\widehat{#1}}

\newcommand{\tv}{\widetilde{V}}
\newcommand{\hv}{\widehat{V}}

\newcommand{\exc}{\rom{Exc}}

\newcommand{\hldea}{h^{\cal{L}}_{\delta,\varepsilon,\alpha}}

\newcommand{\iO}[1]{i\Theta_{#1}}

\subjclass[2020]{32L20, 14F17, 32S20, 14F18, 32J25, 32C15}
\keywords{ 
vanishing theorems, big line bundles, singular Hermitian metrics, multiplier ideal sheaves, complex spaces.}

\begin{document}
\title
[Steenbrink Vanishing Theorem for Big Line Bundles] 
{Steenbrink Vanishing Theorem for Big Line Bundles}
\author{Yuta Watanabe}
\address{Department of Mathematics, Faculty of Science and Engineering, Chuo University.
1-13-27 Kasuga, Bunkyo-ku, Tokyo 112-8551, Japan}
\email{{\tt wyuta.math@gmail.com}, {\tt wyuta@math.chuo-u.ac.jp}}

\begin{abstract}
    In this paper, we generalize the Steenbrink vanishing theorem for ample line bundles on complex projective varieties by extending it to big line bundles on compact complex spaces with multiplier ideal sheaves.
\end{abstract}


\maketitle

\vspace{-5mm}




\section{Introduction}

Vanishing theorems for cohomology groups are a central topic in complex geometry and powerful tools in algebraic geometry and several complex variables.
For example, the existence of global holomorphic sections obtained from cohomology vanishing has many applications.
The Kodaira vanishing theorem \cite{Kod53} has since been generalized in various directions through numerous studies 
(see \cite{AN54,Ram72,Nor78,Bog79,Bog80,Kaw82,Vie82,SJ85,Ste85,Nad89,HLWY23,Wat23,Wat26a,Wat26b,LMNWZ25,MQZ26}). 
These developments have played an important role in a wide range of areas in complex geometry and have become indispensable in the classification theory of higher-dimensional projective algebraic varieties.

The Kodaira--Akizuki--Nakano vanishing theorem \cite{Kod53,AN54} asserts that if $X$ is a compact complex manifold of dimension $n$ and $A\longrightarrow X$ is an ample line bundle, then 
\begin{center}
    $H^q(X,\Omega_X^p\otimes A)=0$ \quad for any \quad $p+q>n$.
\end{center}
Extending the Kodaira--Akizuki--Nakano vanishing theorem from ample to big line bundles is a natural problem.
As indicated by the vanishing theorems of Nadel, Bogomolov, and Kawamata--Viehweg described below, such an extension is highly significant. 
Unlike the Kodaira--Akizuki--Nakano vanishing theorem, which extends to bidegrees $(p,q)$ with $p+q>n$, 
the Nadel vanishing theorem \cite{Nad89} provides an optimal extension of the Kodaira vanishing theorem from ample to big line bundles by twisting with multiplier ideal sheaves.
The Nadel vanishing theorem has a wide range of applications and plays an important role in complex geometry; it also implies the Kawamata--Viehweg vanishing theorem for nef and big line bundles \cite{Kaw82,Vie82}. 
Furthermore, the Nadel vanishing theorem was recently extended to the bidegree $(p,n)$ case in \cite{Wat23}.

In summary, if $X$ is projective and $L$ is big, equivalently, if $L$ admits a singular Hermitian metric $h$ with a strictly positive curvature current (see \cite{Dem90}), then
\begin{center}
    $H^q(X,\Omega_X^n\otimes L\otimes\scr{I}(h))=0$ \quad for any \, $q>0$ \quad \cite{Nad89}, \\
    $H^n(X,\Omega_X^p\otimes L\otimes\scr{I}(h))=0$ \quad for any \, $p>0$ \quad \cite{Wat23}.
\end{center}
The Nadel vanishing theorem in the first line can be extended to complex spaces; in particular, the projectivity assumption can be removed \cite{Wat26b} by using the vanishing theorem for higher direct image sheaves.
The second vanishing result above is a version of the Bogomolov vanishing theorem \cite{Bog79,Bog80} involving the multiplier ideal sheaf $\scr{I}(h)$, and more recently, further generalizations of this result have been obtained in \cite{LMNWZ25,Wat26a,MQZ26}.
However, the above vanishing results cannot be extended to the same bidegrees as in the Kodaira--Akizuki--Nakano vanishing theorem due to Ramanujam's counterexample \cite{Ram72} (see also \cite[Remark 2.10]{Wat23}).
On the other hand, the Steenbrink vanishing theorem for ample line bundles is given as follows.

\begin{theorem}[{Steenbrink, \cite[Theorem 2]{Ste85}}]\label{Theorem: Steenbrink vanishing [Ste85]}
    Let $X$ be a complex projective variety of dimension $n$, let $\Sigma\subset X$ be a closed subset such that $X\setminus\Sigma$ is nonsingular, let $A$ be an ample line bundle on $X$, and let $\mu:\tx\longrightarrow X$ be a proper birational mapping such that $\tx$ is nonsingular, $E:=\mu^{-1}(\Sigma)$ is a simple normal crossing divisor on $\tx$, 
    and $\mu$ induces an isomorphism $\tx\setminus E\cong X\setminus\Sigma$. Then, we have the following vanishing 
    
    \vspace{2mm}
    \begin{tabular}{ll}
    $(a)$ \quad $H^q(\tx,\Omega^p_{\tx}(\log E)\otimes\cal{O}_{\tx}(-E)\otimes\mu^*A)=0$ & \quad for any \quad $p+q>n$, \\[2mm]
    $(b)$ \quad $R^q\mu_*\big(\Omega^p_{\tx}(\log E)\otimes\cal{O}_{\tx}(-E)\big)=0$ & \quad for any \quad $p+q>n$.
    \end{tabular}
\end{theorem}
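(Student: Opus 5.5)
We prove (a) first and then deduce (b) from (a) by a Serre-vanishing/Leray argument. For (a), the natural tool is logarithmic Hodge theory on the compact pair $(\tx,E)$: Deligne's degeneration of the logarithmic Hodge-to-de Rham spectral sequence, in the twisted form used by Esnault--Viehweg. This gives the Akizuki--Nakano type vanishing for log forms twisted by a line bundle built from a cyclic cover. The idea is to realize $\mu^*A$ as a root of an effective divisor that is ample modulo $E$, and to convert the twist by $\cal{O}_{\tx}(-E)$ into a fractional boundary.

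Concretely, since $A$ is ample on $X$ and $\mu$ is an isomorphism off $E$, there exist $m\gg0$ and an effective divisor $F$ supported on $E$ such that $m\mu^*A-F$ is ample on $\tx$. (Take $\mu$ as a composition of blow-ups dominating $\tx$, or use that $\mu^*A$ is big and semi-ample with exceptional locus in $E$; after a further log resolution we may assume this, and the statement is insensitive to such modifications by the standard compatibility of $\Omega^p(\log E)(-E)$ under log blow-ups along strata.) Choose a general $H\in|N(m\mu^*A-F)|$ meeting $E$ transversally, so that $D:=E+H$ is SNC. Write
\[
\mu^*A-E \equiv K \quad\text{with}\quad N m\,\mu^*A = H+NF .
\]
One then applies the Esnault--Viehweg vanishing
\[
H^q\big(\tx,\Omega^p_{\tx}(\log D)\otimes \cal{L}^{-1}\big)=0\ (p+q<n)
\]
for $\cal{L}^{N}=\cal{O}(\sum a_jD_j)$ with $0<a_j<N$ and $\cal{L}$ ample, together with Serre duality $\Omega^p(\log E)(-E)^\vee\otimes K_{\tx}\cong\Omega^{n-p}(\log E)$. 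Dualizing, (a) becomes $H^{q'}(\Omega^{p'}(\log E)\otimes\mu^*A^{-1})=0$ for $p'+q'<n$. This is the form that is proved by choosing $\cal{L}=\mu^*A^{\otimes}$ up to the fractional exceptional part $\frac{1}{Nm}(H+NF)$, with the coefficients of the $E_i$ adjusted into $(0,1)$. The difference between $\Omega^{p'}(\log E)$ and $\Omega^{p'}(\log (E+H))$ is handled by residue sequences along $H$ and induction on dimension, since $H$ is general and $\mu|_H$ satisfies the same hypotheses.

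\textbf{Main obstacle.} Arranging the fractional coefficients of the $E_i$ to lie strictly in $(0,1)$ requires $F$ to have positive coefficient on \emph{every} component of $E$. This is exactly where the full preimage $E=\mu^{-1}(\Sigma)$ is used: all its components are exceptional, or else $\Sigma$ must be blown up. I would first try replacing $\tx$ by a further blow-up and checking invariance of both sides, using that $\nu_*\Omega^p_{\tx'}(\log E')(-E')=\Omega^p_{\tx}(\log E)(-E)$ with vanishing higher direct images for log blow-ups along strata. That invariance is the known part of Steenbrink's argument.

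For (b), fix $q$ with $p+q>n$ and take $A$ sufficiently ample on $X$ (the question is local on $X$, so we may assume $X$ projective). Serre vanishing gives $H^i(X,R^j\mu_*(\cdots)\otimes A)=0$ for $i>0$, so the Leray spectral sequence degenerates to
\[
H^0\big(X,R^q\mu_*(\Omega^p(\log E)(-E))\otimes A\big)=H^q\big(\tx,\Omega^p(\log E)(-E)\otimes\mu^*A\big),
\]
which vanishes by (a). Since $R^q\mu_*(\cdots)\otimes A$ is globally generated for $A$ sufficiently ample, the sheaf $R^q\mu_*(\cdots)$ itself is zero.
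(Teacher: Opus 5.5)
\textbf{Gap in (a).} Your overall plan can work: reduce (a) by Serre duality to $H^{b}(\tx,\Omega^{a}_{\tx}(\log E)\otimes\mu^*A^{-1})=0$ for $a+b<n$, then prove this by a logarithmic Esnault--Viehweg vanishing. Your duality identity is correct. The key step, however, does not go through as written.

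\begin{itemize}
\item The vanishing you quote assumes $\cal{L}$ ample. The bundle you must use is $\cal{L}=\mu^*A$, which is only nef and big, so that theorem does not apply. The auxiliary $H$, the garbled relation ``$\mu^*A-E\equiv K$'', and the residue induction along $H$ do not supply the missing positivity. What you need is the logarithmic Esnault--Viehweg statement: if $\cal{L}^{N}(-D')$ is ample for some $D'=\sum_j\nu_jE_j$ with $0<\nu_j<N$ for every $j$, then $H^b(\tx,\Omega^a_{\tx}(\log E)\otimes\cal{L}^{-1})=0$ for $a+b<n$. The general member $H$ belongs inside the proof of that statement; you do not need to redo it. With $\cal{L}=\mu^*A$ and Serre duality, this gives (a) directly.
\item The obstacle you single out is misdiagnosed and left unresolved. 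Components of $E$ need not be $\mu$-exceptional: take $\mu=\mathrm{id}$ and $\Sigma$ a smooth hypersurface. Further blow-ups cannot make a non-exceptional component exceptional either. The fix is openness of ampleness. If $m\mu^*A-F$ is ample, then $k(m\mu^*A-F)-E$ is ample for $k\gg0$. So $D':=kF+E$ has positive coefficient on every $E_j$. Adding multiples of the nef bundle $\mu^*A$ then lets you take $N$ larger than all coefficients.
\end{itemize}

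\textbf{Remaining assumption and comparison.} With these repairs the argument is complete, except that the existence of $F$ requires $\mu$ to be projective. The paper silently assumes the same: its negativity-lemma step produces a positive $\bb{Q}$-line bundle $\mu^*A\otimes\cal{O}_{\tx}(-\sum_j\delta_jE_j)$, which forces $\tx$ to be projective. Your reduction handles this in principle. It needs $R\nu_*\big(\Omega^p_{\tx'}(\log E')\otimes\cal{O}_{\tx'}(-E')\big)\cong\Omega^p_{\tx}(\log E)\otimes\cal{O}_{\tx}(-E)$ for an arbitrary modification $\nu$ that is an isomorphism off $E$, with $E'=\nu^{-1}(E)_{red}$ simple normal crossing. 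Blow-ups along strata alone will not reach a projective model, so this general isomorphism should be cited; it follows from Du Bois theory of the pair.

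The paper feeds the same datum ($\delta_j\in[0,1)$ with $\mu^*A-\sum_j\delta_jE_j$ positive) into \cite[Theorem 1.1]{HLWY23}. It applies that theorem to $\mu^*A\otimes\cal{O}_{\tx}(-E)$ with boundary coefficients $1-\delta_j\in(0,1]$. Since that theorem allows coefficients in the closed interval $[0,1]$, $\delta_j=0$ is permitted. So the paper has no positivity-of-every-coefficient issue and needs no cyclic covers or duality.

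For (b), your argument via Leray, Serre vanishing and global generation, applied to $A^{\otimes k}$, is correct and shorter than the paper's. The paper instead reruns the $L^2$ argument of Theorem \ref{Theorem: Steenbrink vanishing for singular positive in not Introduction} on the weakly pseudoconvex sets $\mu^{-1}(U)$ over small Stein $U$. That version needs no global ample bundle, which is what the paper's analytic generalizations require.
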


Although $\mu^*A$ is not necessarily ample, it is big. This suggests that logarithmic-type vanishing theorems for big line bundles may hold in the same bidegrees as the Kodaira--Akizuki--Nakano vanishing theorem. 
Therefore, we attempt to extend the ampleness assumption in the Steenbrink vanishing theorem to the case of big line bundles.

Logarithmic-type vanishing theorems are recalled below.
Let $X$ be a projective manifold and $D$ be a simple normal crossing divisor on $X$. For an ample line bundle $A$ on $X$, the vanishing result
\begin{center}
    $H^q(X,\Omega_X^p(\log D)\otimes A)=0$ \quad for any \quad $p+q>n$,
\end{center}
was obtained in \cite{Nor78} using analytic methods. Recently, a more refined analogous result concerning positivity was obtained in \cite{HLWY23}. 
As a direct consequence of \cite[Theorem 1.1]{HLWY23}, it follows that 
\begin{center}
    $H^q(X,\Omega_X^p(\log D)\otimes\cal{O}_X(-D)\otimes A)=0$ \quad for any \quad $p+q>n$.
\end{center}
More recently, this result has been extended to big line bundles as follows.

\begin{theorem}[{\cite[Corollary 1.3]{Wat26a}}]\label{Theorem: logarithmic vanishing for big line bundle in [Wat26a]}
    Let $X$ be a compact \kah manifold of dimension $n$, let $D$ be a simple normal crossing divisor on $X$, and let $L\longrightarrow X$ be a holomorphic line bundle. 
    If $L$ is big, that is, if $L$ admits a singular Hermitian metric $h$ with a strictly positive curvature current on $X$, then we have the following vanishing 
    \begin{align*}
        H^q(X,\Omega^n_X(\log D)\otimes\cal{O}_X(-D)\otimes L\otimes\scr{I}(h))=0& \qquad\text{for any}\quad q>0, \\ 
        H^n(X,\Omega^p_X(\log D)\otimes\cal{O}_X(-D)\otimes L\otimes\scr{I}(h))=0& \qquad\text{for any}\quad p>0.
    \end{align*}
\end{theorem}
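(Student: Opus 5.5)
Both vanishings will come from the $L^2$ method for $\dbar$ on a complete \kah manifold. The first line reduces to the Nadel vanishing theorem. The second needs a genuinely logarithmic $L^2$ argument, along the lines of the proof of the $(p,n)$-version of Nadel vanishing in \cite{Wat23}.

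\textbf{First line.} Use the residue identification $\Omega^n_X(\log D)\cong K_X\otimes\cal{O}_X(D)$. It gives
\[
\Omega^n_X(\log D)\otimes\cal{O}_X(-D)\otimes L\otimes\scr{I}(h)\cong K_X\otimes L\otimes\scr{I}(h),
\]
so the first vanishing is exactly Nadel's theorem on a compact \kah manifold. In that setting it holds for any $h$ whose curvature current satisfies $\iO{L,h}\geq\varepsilon\omega$ (Demailly's $L^2$ version). So nothing new is needed here.

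\textbf{Second line, curvature input.} I plan to realize the sheaf $\scr{F}_p:=\Omega^p_X(\log D)\otimes\cal{O}_X(-D)\otimes L\otimes\scr{I}(h)$ by an $L^2$ Dolbeault complex on $Y:=X\setminus(D\cup Z)$. Here $Z$ is the polar set of a regularized metric. The steps, in order, are:
\begin{itemize}
\item Replace $h$ by an equisingular approximation $h_\delta$ (Demailly--Peternell--Schneider). Each $h_\delta$ is smooth outside an analytic set $Z$, satisfies $\scr{I}(h_\delta)=\scr{I}(h)$, and has $\iO{L,h_\delta}\geq(\varepsilon-\delta)\omega$.
\item Equip $Y$ with a complete \kah metric $\omega_Y$. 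It is Poincaré-type along $D$, i.e.\ it looks like $\omega+\sum_j\idd(-\log(-\log|s_j|^2))$. It is also complete near $Z$, through the usual $-\log(-\psi)$ correction.
\item Put on $\cal{O}_X(-D)$ the metric $\prod_j|s_j|^{2}$, possibly corrected by powers of $\log|s_j|$. The aim is that locally $L^2$ $(p,\ast)$-forms with respect to $\omega_Y$ and this weight correspond exactly to sections of $\Omega^p_X(\log D)\otimes\cal{O}_X(-D)$. In the Poincaré frame $dz_j/z_j$ the log forms are bounded, so the $-D$ twist plus a small logarithmic factor should give precisely the right integrability.
\item Apply Bochner--Kodaira--Nakano on $(p,n)$-forms. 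If $\lambda_1\le\cdots\le\lambda_n$ are the eigenvalues of the total curvature relative to $\omega_Y$, then
\[
\langle[\iO{},\Lambda]u,u\rangle\ \ge\ \sum_{j\in J}\lambda_j\,|u|^2 \qquad\text{for } u \text{ of bidegree } (p,n),
\]
and the right-hand side is $\ge c\,p\,|u|^2$ when all $\lambda_j\ge c>0$. This positivity is why $p>0$ suffices in top degree $q=n$.
\item Solve $\dbar$ with $L^2$ estimates, uniformly in $\delta$. Let $\delta\to0$ by weak compactness, keeping $\scr{I}(h)$ fixed by equisingularity.
\end{itemize}

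\textbf{Second line, identification with sheaf cohomology.} Prove a de Rham--Weil type isomorphism: the sheaves of germs of locally $L^2$ forms $u$ with $\dbar u$ also locally $L^2$ (for $\omega_Y$ and the chosen weights) form a fine resolution of $\scr{F}_p$. This requires a local $L^2$ Dolbeault lemma near points of $D\cup Z$, proved on polydisc neighbourhoods with their own complete metrics. It also requires showing that the $L^2$-holomorphic sections are exactly $\scr{F}_p$. Once this holds, $H^n(X,\scr{F}_p)$ is computed by $L^2$ cohomology, which vanishes by the previous paragraph.

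\textbf{Main obstacle.} I expect the hardest step to be the curvature and weight balancing near $D$. Two things can go wrong:
\begin{itemize}
\item The curvature of $\cal{O}_X(-D)$ with its log-corrected metric, together with the Poincaré metric, produces terms that are only bounded, not positive, relative to $\omega_Y$. They must be absorbed into $\varepsilon\omega$ without destroying the exact $L^2$ characterization of $\Omega^p_X(\log D)(-D)$.
\item The exponents of the $\log|s_j|$ factors must be small enough for the curvature to stay positive. At the same time they must be large enough that the local $L^2$ condition neither admits extra poles nor kills genuine log forms.
\end{itemize}
My first attempt is to use weights $\prod_j|s_j|^2(\log|s_j|^2)^{2}$ scaled by a small parameter. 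I would check that on the model $(\Delta^*)^k\times\Delta^{n-k}$ the $L^2$ holomorphic $p$-forms are exactly $z_1\cdots z_k\cdot\Omega^p(\log)$.
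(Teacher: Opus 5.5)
Your first line is exactly the paper's argument: the residue identification plus Nadel. For the second line the paper gives no proof and cites \cite{Wat26a}. The remark after Theorem \ref{Theorem: Logarithmic L2-Dolbeault resolusion} indicates that it rests on the logarithmic $L^2$ resolution with $\tau_j=1$ for a singular metric, and your weight $\prod_j|s_j|^2(\log|s_j|^2)^{2\alpha}$ on the $\cal{O}_X(-D)$-twist is precisely that choice.

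\textbf{The gap: your treatment of the polar set $Z$.} You make $\omega_Y$ complete near $Z$ and want the $L^2$ complex for $\omega_Y$, with local lemmas near $Z$ using complete metrics, to resolve $\scr{F}_p$. For $p<n$ this is false, because $|u|^2_\omega dV_\omega$ on $(p,0)$-forms depends on $\omega$. Take $n=2$, $D=\emptyset$, and local weight $e^{-2\varphi}=|z_1|^{-2a}$ with $0<a<1$, so $\scr{I}(h)=\cal{O}$. Near $Z=\{z_1=0\}$ a $-\log(-\psi)$ correction produces $\omega_Y\approx\frac{i\,dz_1\wedge d\bar z_1}{|z_1|^2\log^2|z_1|^2}+i\,dz_2\wedge d\bar z_2$. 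Then $dz_1$ and $z_1dz_2$ are $L^2$, but
\[
\int|dz_2|^2_{\omega_Y}e^{-2\varphi}\,dV_{\omega_Y}\approx\int\frac{|z_1|^{-2a}}{|z_1|^2\log^2|z_1|^2}\,dV_0=+\infty .
\]
So the sheaf of $L^2$ holomorphic $1$-forms is $\Omega^1(\log Z)\otimes\cal{O}(-Z)$, not $\Omega^1\otimes\scr{I}(h)$. This is exactly why, in Theorem \ref{Theorem: Steenbrink vanishing for singular positive in not Introduction}, the paper has to put $Z$ inside the logarithmic divisor $E$. Moreover, relative to $\omega_Y$ the smallest eigenvalue of $\iO{L,h_\delta}$ tends to $0$ near $Z$, so your bound $\lambda_j\geq c>0$ fails there. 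Adding $-\log(-\psi)$-terms to the weight to restore it changes the sheaf again.

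\textbf{The missing idea.} You need Demailly's limiting argument in the metric, transposed to bidegree $(p,n)$. Build the resolution for $\omega_P$, which is Poincar\'{e}-type along $D$ only and smooth across $Z\setminus D$; there the local $\dbar$-lemma is just H\"{o}rmander with a psh weight on a polydisc. Use the complete metrics $\omega_t:=\omega_P+t\hat\omega$ on $X\setminus(D\cup Z)$ only as an auxiliary device to solve $\dbar u_t=f$.

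The key fact is that for $(p,n)$-forms the $(0,n)$-factor cancels against the volume form. So for K\"{a}hler metrics $\gamma\geq\omega$ one has $|f|^2_\gamma dV_\gamma\leq|f|^2_\omega dV_\omega$ and $\langle A_\gamma^{-1}f,f\rangle_\gamma dV_\gamma\leq\langle A_\omega^{-1}f,f\rangle_\omega dV_\omega$; this is the conjugate of Demailly's lemma for $(n,q)$-forms. If the total curvature is $\geq c_0\,\omega_P$, this yields $\|u_t\|^2_{\omega_t}\leq\frac{1}{p\,c_0}\|f\|^2_{\omega_P}$ uniformly in $t$. Since $\omega_t\to\omega_P$ locally uniformly on $X\setminus(D\cup Z)$, a weak limit gives $\dbar u=f$ with the same bound, and the equation extends across $Z\setminus D$.

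This, and not only $\sum_{j\in J}\lambda_j\geq p\lambda_1$, is where $q=n$ enters. For $q<n$ neither the metric-independent positivity nor the monotonicity holds, in line with Ramanujam's example.

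\textbf{Smaller points.} Your $\delta\to0$ limit does not replace the $t\to0$ limit, and it is in fact unnecessary: since $\iO{L,h}\geq\varepsilon\omega$, a single approximant with $\delta<\varepsilon$ already satisfies $\scr{I}(h_\delta)=\scr{I}(h)$. Also, the obstacle you anticipate near $D$ is not one. With the factor $\prod_j|s_j|^2$, your metric on $\cal{O}_X(-D)$ is the canonical singular one, which is flat on $X\setminus D$. The factor $(\log(\eta|s_j|^2))^{2\alpha}$ contributes a positive Poincar\'{e}-type term plus a bounded term made small by shrinking $\eta$; compare the curvature formula in the proof of Theorem \ref{Theorem: Steenbrink vanishing for singular positive in not Introduction}. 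So $\alpha$ may, and for the local lemma must, be taken large.
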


Note that the vanishing in the first line follows immediately from the Nadel vanishing theorem, since $\Omega^n_X(\log D)=K_X\otimes\cal{O}_X(D)$. 
However, in general, Theorem \ref{Theorem: logarithmic vanishing for big line bundle in [Wat26a]} also cannot be extended to the same bidegrees as the Kodaira--Akizuki--Nakano vanishing theorem, due to the non-vanishing result induced by Ramanujam's counterexample (see Remark \ref{Remark: non-vanishing result}).
Therefore, assuming only bigness is not sufficient to obtain such an extension. 
Motivated by the observation that the non-ample behavior of a big line bundle can be partially removed by twisting with a multiplier ideal sheaf, we establish the following main theorem by taking suitable blow-ups.

\begin{theorem}\label{Theorem: Steenbrink vanishing for big in Introduction}
    Let $X$ be a compact complex space of pure dimension $n$, let $\mu:\hx\longrightarrow X$ be a resolusion of singularities, and let $L\longrightarrow X$ be a holomorphic line bundle.
    If $L$ is big, then $X$ is Moishezon and there exists a singular Hermitian metric $h$ on $L$ such that 
    $\mu^*h$ has algebraic singularities on $\hx\setminus\exc(\mu)$ and a strictly positive curvature current on $\hx$, where $\exc(\mu)$ is the $\mu$-exceptional divisor. 
    For a log resolusion $\pi:\tx\longrightarrow\hx$ of the singular locus $Z$ of $\mu^*h$, the following Steenbrink-type vanishing holds: 
    \begin{align*}
        H^q(\tx,\Omega^p_{\tx}(\log E)\otimes\cal{O}_{\tx}(-E)\otimes\tl{\pi}^*L\otimes\scr{I}(\tl{\pi}^*h))=0,
    \end{align*}
    for any $p+q>n$, where $\tl{\pi}:=\mu\circ\pi:\tx\longrightarrow X$, and $E:=\pi^{-1}(Z)_{red}$ is a simple normal crossing divisor. 
    Furthermore, if $X$ is normal, then $h$ is singular positive.
\end{theorem}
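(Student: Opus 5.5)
We first dispose of the structural claims and then reduce the vanishing to a statement about a nef-and-big-type metric on $\tx$ where the singularities are concentrated on the snc divisor $E$.

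\emph{Step 1: Moishezon property and the metric.} Since $L$ is big, $\mu^*L$ is big on the compact manifold $\hx$. Hence $\hx$ (and thus $X$) is Moishezon, and $\hx$ carries a Kähler current in $c_1(\mu^*L)$. By Demailly's regularization we may take a singular metric $h_0$ on $\mu^*L$ with algebraic singularities and $i\Theta_{h_0}\ge\varepsilon\omega$ for a Hermitian form $\omega$. To get a metric on $L$ itself, I would push down via $\mu$. On $\hx\setminus\exc(\mu)\cong X_{reg}$ the metric is already defined. I would extend it across $\mu(\exc(\mu))$ by adding a large multiple of $\log|s|^2$, where $s$ ranges over sections of $\mu^*L^{m}$ vanishing on $\exc(\mu)$ (these exist by bigness). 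This makes the local weights $-\infty$ near the singular set, so they descend to $X$. Thus $\mu^*h$ has algebraic singularities off $\exc(\mu)$ and a strictly positive curvature current. When $X$ is normal, a psh function on $X_{reg}$ bounded above extends across the codimension-$\ge2$ set $X_{sing}$ (Grauert--Remmert), which gives singular positivity.

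\emph{Step 2: Reduction on $\tx$.} Let $\pi$ be a log resolution of $Z$, so that $\tl\pi^*h$ has weights of the form $\sum c_j\log|s_{E_j}|^2+$ smooth along $E=\sum E_j$. Then $\scr{I}(\tl\pi^*h)=\cal{O}_{\tx}(-\sum\lfloor c_j\rfloor E_j)$. Write $c_j=\lfloor c_j\rfloor+\{c_j\}$, set $F=\sum\lfloor c_j\rfloor E_j$, and set $M=\tl\pi^*L-F$. Then $M$ carries the metric with weight $\sum\{c_j\}\log|s_j|^2$ plus a smooth part. After perturbing by $-\delta\sum\log|s_j|^2$-type terms (using that $\pi^*$ of a Kähler current minus a small combination of exceptional divisors is still Kähler, by the standard blow-up trick), $M-\sum a_jE_j$ becomes a $\bb{Q}$-line bundle with a smooth positive metric for some $0\le a_j<1$. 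The target group then becomes
\[H^q(\tx,\Omega^p_{\tx}(\log E)\otimes\cal{O}_{\tx}(-E)\otimes M).\]

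\emph{Step 3: Vanishing.} $M$ is ample modulo a fractional boundary supported on $E$. I would apply the logarithmic Kodaira--Akizuki--Nakano vanishing with $\bb{R}$-boundary, i.e. the result of \cite{HLWY23} in the form $H^q(\Omega^p(\log E)(-E)\otimes(A+\Delta))=0$ for $A$ positive and $\Delta$ fractional and supported on $E$, obtained from $L^2$ Hodge theory with Poincaré-type metrics. Alternatively, I would reduce via a Kawamata cyclic cover to the integral ample case.

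\emph{Main obstacle.} The hard part is Step 3: controlling the fractional parts $\{c_j\}$ and $a_j$ simultaneously so that the twisted curvature remains positive on the whole space $\Omega^p(\log E)(-E)$ for all $p+q>n$. Coefficients that are exactly integral, or that fail to be strictly less than $1$, must be absorbed into the $-E$ twist. I would first try choosing the perturbation $\delta$ generic so that all coefficients are non-integral, and then check by a local computation that the $L^2$ condition matches $\scr{I}$ together with the $\cal{O}(-E)$ factor.
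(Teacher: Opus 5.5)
Your proposal is correct and follows essentially the paper's route: regularize to a metric with algebraic singularities on $\mu^*L$, pass to a log resolution, subtract the fractional parts together with a small exceptional perturbation (the Negativity Lemma step in Theorem~\ref{Theorem: Blow-ups and Q-line bundles}) to make $\tl{\pi}^*L-\sum_j(\lfloor c_j\rfloor+a_j)E_j$ positive with $a_j\in[0,1)$, and conclude by \cite[Theorem 1.1]{HLWY23}. That theorem already allows boundary coefficients $1-a_j\in(0,1]$, so the non-integrality/genericity worry in your last paragraph is unnecessary; note also that $\tx$ is projective, hence K\"ahler, because it carries a positive $\bb{Q}$-line bundle.

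The only real deviation is how you descend the metric to $X$: the paper simply transports $\wh{\cal{H}}$ to $X_{reg}$ and assigns arbitrary values on the null set $X_{sing}$. If you keep your version, combine $h_0$ convexly with the singular metric $|s|^{-2/m}$ using a small rational weight, not a large multiple of $\log|s|^2$. That way you stay on the same line bundle and keep strict positivity and algebraic singularities.
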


Theorem \ref{Theorem: Steenbrink vanishing for big in Introduction} is obtained by applying \cite[Theorem 1.1]{HLWY23} to the following positivity result for $\bb{Q}$-line bundles.
In particular, unlike the Steenbrink vanishing theorem, the projectivity of $X$ is not required.
More generally, a result corresponding to Theorem \ref{Theorem: Steenbrink vanishing for big in Introduction} can be proved for singular positive line bundles on relatively compact weakly pseudoconvex complex spaces (= Theorem \ref{Theorem: Steenbrink vanishing for singular positive in not Introduction}).

\begin{theorem}\label{Theorem: positivity of Q-line bundle in Introduction}
    Let $X$ be a compact complex manifold and $L\longrightarrow X$ be a holomorphic line bundle. 
    If $L$ is big, that is, if $L$ admits a singular Hermitian metric $h$ with algebraic singularities and a strictly positive curvature current on $X$, then for a log resolusion $\pi:\tx\longrightarrow\hx$ of the singular locus $Z$ of $h$, 
    there exist nonnegative rational numbers $\delta_j\in\![\upsilon_{E_j}(h)-\lfloor\upsilon_{E_j}(h)\rfloor,\!1)\cap\bb{Q}\subset[0,1)$ such that the following $\bb{Q}$-line bundle
    \begin{align*}
        \pi^*L\otimes\scr{I}(\pi^*h)\otimes\cal{O}_{\tx}\Big(-\sum_{j\in J}\delta_jE_j\Big)
    \end{align*}
    is positive on $\tx$, where $\sum_{j\in J}E_j:=\pi^{-1}(Z)_{red}$ is a simple normal crossing divisor 
    and $\upsilon_{E_j}(h):=\nu(\pi^*h,E_j)$ denotes the divisorial Lelong number of $h$ along $E_j$.
\end{theorem}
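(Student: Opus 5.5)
Write $h=e^{-\varphi}$ locally, with algebraic singularities, so $\varphi = c\log\sum|g_k|^2+O(1)$ with $c\in\bb{Q}_{>0}$, and $\iO{L,h}\ge\varepsilon\omega$ for some Hermitian metric $\omega$ on $X$. The plan is to transfer everything to $\tx$, where the singularities become divisorial, and then read off the multiplier ideal and build a smooth positive metric on the $\bb{Q}$-line bundle.

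\emph{Step 1: transfer to $\tx$.} Since $\pi$ is a log resolution of $Z$ (equivalently, of the ideal $(g_k)$), we have $\pi^*\varphi=\sum_{j\in J}\upsilon_{E_j}(h)\log|s_j|^2+\psi$ with $\psi$ smooth. Here $s_j$ are canonical sections of $\cal{O}(E_j)$, and each $\upsilon_{E_j}(h)\in\bb{Q}_{\ge0}$, with some possibly zero. Setting $a_j:=\lfloor\upsilon_{E_j}(h)\rfloor$ and using the snc property, we get $\scr{I}(\pi^*h)=\cal{O}_{\tx}(-\sum_j a_jE_j)$. The $\bb{Q}$-line bundle in question is therefore
\[
F:=\pi^*L-\sum_j(a_j+\delta_j)E_j .
\]
Since $\delta_j\ge\upsilon_{E_j}(h)-a_j$, we write $a_j+\delta_j=\upsilon_{E_j}(h)+\eta_j$ with $\eta_j:=\delta_j-(\upsilon_{E_j}(h)-a_j)\ge0$. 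Hence $F=(\pi^*L-\sum\upsilon_{E_j}E_j)-\sum\eta_jE_j$.

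\emph{Step 2: a semipositive smooth metric.} The bundle $\pi^*L-\sum\upsilon_{E_j}E_j$ carries the smooth metric $e^{-\psi}$. Its curvature $i\partial\dbar\psi$ equals $\pi^*\iO{L,h}$ off $\bigcup E_j$, so it is $\ge\varepsilon\pi^*\omega$ there. By continuity $i\partial\dbar\psi\ge\varepsilon\pi^*\omega\ge0$ on all of $\tx$, and it is strictly positive off $\pi$-exceptional directions.

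\emph{Step 3: fix the exceptional directions.} Because $\pi$ is a composition of blow-ups with smooth centers (a log resolution can be chosen this way), there exist rational numbers $b_j\ge0$, supported on the $\pi$-exceptional components, such that $-\sum b_jE_j$ admits a smooth metric whose curvature $\theta$ is positive on $\ker d\pi$ and bounded below by $-C\pi^*\omega$. This is the standard relative ampleness of $-\sum b_jE_j$ for iterated blow-ups. For small rational $t>0$, the form $\varepsilon\pi^*\omega+t\theta$ is then positive definite on compact $\tx$. We take $\eta_j:=tb_j$, giving $\delta_j=\upsilon_{E_j}(h)-a_j+tb_j$. Choosing $t$ small guarantees $\delta_j<1$, since $\upsilon_{E_j}-a_j<1$ and $J$ is finite, and rationality is clear. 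Then $F$ carries the smooth metric $e^{-\psi}\cdot(\text{metric on }-t\sum b_jE_j)$, whose curvature is $\ge\varepsilon\pi^*\omega+t\theta>0$, so $F$ is positive.

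\emph{Main obstacle.} The delicate point is Step 3 on a non-Kähler, non-projective $X$. The relative anti-ampleness of an exceptional divisor must be established analytically for the given log resolution. It requires $\pi$ to be a composition of blow-ups along smooth centers, which is where the log resolution must be chosen carefully. The weights must also be combined through the iterated blow-ups, using $\pi_k^*$ of earlier negative combinations plus a small multiple of the new exceptional divisor. I would follow the standard inductive construction, as in Demailly's proof that a Kähler class stays Kähler after blow-up, working only with the Hermitian form $\omega$. One must also check the positivity of the combined curvature at each stage.
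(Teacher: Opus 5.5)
Your argument is the paper's own route. The statement is deduced from the proof of Theorem~\ref{Theorem: Blow-ups and Q-line bundles}, which proceeds in three steps:
\begin{itemize}
\item after the log resolution, it cancels $\sum_j\upsilon_jE_j$ against the divisorial part of the metric, giving a smooth metric on $\pi^*L\otimes\cal{O}(-\sum_j\upsilon_jE_j)$ with curvature $\geq\pi^*(\varepsilon\omega)/2$;
\item it uses the Negativity Lemma to get $P_\kappa=\cal{O}(-\sum_j\kappa_jE_j)$, supported on $\exc(\pi)$, whose metric compensates the degeneration of $\pi^*\omega$;
\item it rescales by $1/(m+\ell)$ so that $\kappa_j/(m+\ell)<1-\upsilon_j+\lfloor\upsilon_j\rfloor$.
\end{itemize}
This last step is exactly your $tb_j$ with $t$ small, and your inductive construction of $\theta$ is what the paper packages as the Negativity Lemma.

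Two assertions in your Step~1 are wrong as written, although both are easily repaired.

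\emph{Smoothness of $\psi$.} You start with a remainder that is only $O(1)$, which is all the paper's definition of algebraic singularities provides, and then declare $\psi$ smooth. In fact $\psi=c\log\sum_k|\tau_k|^2+\pi^*O(1)$ is merely bounded. So $e^{-\psi}$ is not a smooth metric, and you cannot extend $\idd\psi\geq\varepsilon\pi^*\omega$ across $E$ by continuity. The paper avoids this because it works with the approximant $\hbar$ from Theorem~\ref{Theorem: Demailly approximation with alg sing and ideal sheaves}, whose remainder $\varPhi_{m_\nu}$ is smooth. You should either assume a smooth remainder in the same way, or add a final regularization step. A bounded potential charges no $E_j$ and has zero Lelong numbers, so the curvature inequality extends across $E$ as currents. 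Demailly's regularization then yields a smooth positive metric on the $\bb{Q}$-line bundle.

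\emph{Resolution of $Z$ versus resolution of the ideal.} A log resolution of the set $Z$ is not equivalent to a log resolution of the ideal $(g_k)$. Take $(x^2,y)\subset\cal{O}_{\bb{C}^2}$ and blow up the origin: the preimage is snc, but in the chart $y=xv$ the pulled-back ideal is $x\cdot(x,v)$. Then $\pi^*\varphi$ keeps a point singularity, and $\scr{I}(\pi^*h)$ can fail to be invertible. You need $\pi$ to principalize the ideal. The paper tacitly uses the same thing when it writes $\pi^*\sigma_{\nu,k}=g\,\tau_{\nu,k}$ with the $\tau_{\nu,k}$ having no common zeros.
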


Finally, analogous to the Kawamata--Viehweg vanishing theorem, the Steenbrink vanishing theorem can be extended to nef and big line bundles.

\begin{theorem}\label{Theorem: Steenbrink vanishing for nef and big}
    Let $X$ be a compact complex space of pure dimension $n$, let $\mu:\hx\longrightarrow X$ be a resolusion of singularities, and let $L\longrightarrow X$ be a holomorphic line bundle.
    If $L$ is nef and big, then for a log resolusion $\pi:\tx\longrightarrow\hx$ of the singular locus $Z$ of a singular Hermitian metric $\wh{h}$ on $\mu^*L$, the following Steenbrink-type vanishing holds:
    \begin{align*}
        H^q(\tx,\Omega^p_{\tx}(\log E)\otimes\cal{O}_{\tx}(-E)\otimes\tl{\pi}^*L)=0,
    \end{align*}
    for any $p+q>n$, where $\tl{\pi}:=\mu\circ\pi:\tx\longrightarrow X$ and $E:=\pi^{-1}(Z)_{red}$ is a simple normal crossing divisor. 
    In particular, $\wh{h}$ can be chosen to have algebraic singularities, to admit a strictly positive curvature current, and to satisfy $\scr{I}(\wh{h})=\cal{O}_{\hx}$ on $\hx$.
\end{theorem}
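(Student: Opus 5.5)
Since $L$ is nef and big, $\mu^*L$ is nef and big on the compact complex manifold $\hx$, and in particular $X$, hence $\hx$, is Moishezon. I will first construct $\wh{h}$. By bigness and Theorem \ref{Theorem: Steenbrink vanishing for big in Introduction}, applied to $\mu^*L$ on $\hx$, there is a metric $h_0$ on $\mu^*L$ with algebraic singularities and curvature $\ge\varepsilon\omega$ for some Hermitian (in fact \kah after modification) form $\omega$. Nefness gives, for every $\eta>0$, a smooth metric $h_\eta$ with $\iO{h_\eta}\ge-\eta\omega$. Set
\[
\wh{h}:=h_\eta^{1-1/m}h_0^{1/m}
\]
with $m$ large and $\eta$ small. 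Its curvature is $\ge(\varepsilon/m-\eta)\omega>0$. Its singularities are those of $h_0^{1/m}$, which are algebraic with rational coefficients. Moreover $\scr{I}(\wh{h})=\scr{I}(h_0^{1/m})=\cal{O}_{\hx}$ for $m$ large, because the Lelong numbers of $h_0$ are bounded, so Skoda's lemma applies. This gives the second claim of the theorem. The singular locus $Z$ of $\wh{h}$ equals that of $h_0$, so any log resolution $\pi$ of $Z$ works.

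Next I apply Theorem \ref{Theorem: positivity of Q-line bundle in Introduction} to $(\hx,\mu^*L,\wh{h})$. It gives rationals $\delta_j\in[\upsilon_{E_j}(\wh{h})-\lfloor\upsilon_{E_j}(\wh{h})\rfloor,1)$ such that
\[
\pi^*\mu^*L\otimes\scr{I}(\pi^*\wh{h})\otimes\cal{O}_{\tx}\Big(-\sum\delta_jE_j\Big)
\]
is positive. Since $m$ is large, all $\upsilon_{E_j}(\wh{h})=\upsilon_{E_j}(h_0)/m$ lie in $[0,1)$. Hence $\lfloor\upsilon_{E_j}\rfloor=0$, so $\scr{I}(\pi^*\wh{h})=\cal{O}_{\tx}(-\sum\lfloor\upsilon_{E_j}\rfloor E_j)=\cal{O}_{\tx}$, and the $\bb{Q}$-line bundle $\tl{\pi}^*L-\sum\delta_jE_j$ is positive with $\delta_j\in[0,1)$.

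The main step is then the logarithmic vanishing. I will use \cite[Theorem 1.1]{HLWY23}, exactly as in the proof of Theorem \ref{Theorem: Steenbrink vanishing for big in Introduction}. For an SNC divisor $E=\sum E_j$ and a line bundle $M$ such that $M-\sum\delta_jE_j$ is positive with $\delta_j\in[0,1)$, it gives
\[
H^q(\tx,\Omega^p_{\tx}(\log E)\otimes\cal{O}_{\tx}(-E)\otimes M)=0\quad\text{for } p+q>n.
\]
This follows by twisting $M\otimes\cal{O}(-E)$, written as $\big(M-\sum\delta_jE_j\big)+\sum(\delta_j-1)E_j$ with coefficients $\delta_j-1\in[-1,0)$, i.e. negative fractional parts on $E$. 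Taking $M=\tl{\pi}^*L$, with trivial multiplier ideal, yields the claimed vanishing.

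The expected obstacle is checking that the hypotheses of \cite[Theorem 1.1]{HLWY23} hold on the possibly non-\kah but Moishezon manifold $\tx$. I would handle this as in the proof of Theorem \ref{Theorem: Steenbrink vanishing for big in Introduction}: the positivity of the $\bb{Q}$-line bundle makes $\tx$ projective, so that result applies directly. Alternatively, the statement follows formally from Theorem \ref{Theorem: Steenbrink vanishing for big in Introduction} applied to $\wh{h}$ once $\scr{I}(\tl{\pi}^*\wh{h})=\cal{O}_{\tx}$ is known. The only subtlety there is that the metric on $\hx$ need only be defined on $\mu^*L$, which the proof of that theorem allows.
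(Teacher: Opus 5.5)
Your proposal is correct and follows the paper's route. The paper's Theorem~\ref{Theorem: positivity of Q-line bundles for nef and big} builds $h_{nb}=\hbar^{\varepsilon/c_0}h_\varepsilon^{1-\varepsilon/c_0}$ as the same convex combination as your $h_\eta^{1-1/m}h_0^{1/m}$ (exponent $\varepsilon/c_0$ in place of $1/m$), uses Skoda and the small exponent to get $\scr{I}=\cal{O}$ and $\upsilon_{E_j}<1$, runs the negativity-lemma argument of Theorem~\ref{Theorem: Blow-ups and Q-line bundles} to obtain $\delta_j\in[\upsilon_{E_j},1)$, and concludes with \cite[Theorem 1.1]{HLWY23} as in the proof of Theorem~\ref{Theorem: Steenbrink vanishing for big in Introduction}.

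The one point to state carefully is the order of choices. Divisorial Lelong numbers on $\tx$ can exceed the pointwise Lelong numbers that Skoda controls, so your remark that ``any log resolution works'' needs qualifying. First fix $\pi$, a log resolution of the $m$-independent set $Z$; then take $m>\max_j\upsilon_{E_j}(h_0)$, and only then $\eta$ small. This matches the paper's choice $\varepsilon<c_0/\max\{\upsilon_{max},\nu_{max}\}$.
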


\section{Preliminaries}

\subsection{Plurisubharmonicity and singular Hermitian metrics of line bundles on complex spaces}

Let $X$ be a complex space. 

\begin{definition}[{\cite[Chapter\,V, Definition\,1.4]{GPR94}}]
    A function $\varphi:X\longrightarrow[-\infty,+\infty)$ is called (resp. \textit{strictly}) \textit{plurisubharmonic} if for any $x\in X$ 
    there exist an open neighborhood $U$ admitting a closed holomorphic embedding $\iota_U\!:\!U\!\hookrightarrow \!V\!\subset\bb{C}^N$, here $V$ is an open subset, 
    and a (resp. strictly) plurisubharmonic function $\widetilde{\varphi}$ on $V$ 
    such that $\varphi|_U=\widetilde{\varphi}\circ\iota_U$.
\end{definition}

A function is said to be \textit{quasi}-\textit{plurisubharmonic} if it can be written locally as the sum of a smooth function and a plurisubharmonic function.
Let $L$ be a holomorphic line bundle. 
For any trivialization $\tau:L|_U\overset{\simeq}{\longrightarrow}U\times\bb{C}$, a Hermitian metric $h$ on $L$ can be expressed as $|\xi|_h=|\tau(\xi)|e^{-\varphi(x)}$, $x\in U$, $\xi\in L_x$, using a function $\varphi$ on $U$. 
The function $\varphi:U\longrightarrow\bb{R}$ is called the \textit{weight function of} $h$ \textit{with respect to the trivialization} $\tau$.

\begin{definition}[{\cite[Definition 2.6]{Wat26b}}]\label{Definition: singular Hermitian metrics on cpx sp}
    Let $X$ be a complex space and $L\longrightarrow X$ be a holomorphic line bundle. 
    We say that $h$ is a singular Hermitian metric on $L$ if for any smooth Hermitian metric $h_0$ on $L$, there exists a locally integrable function $\varphi$ on $X$, i.e., $\varphi\in L^1_{loc}(X)$, such that $h=h_0e^{-2\varphi}$ on $X$.
\end{definition}

\begin{definition}[{\cite[Definition 2.7]{Wat26b}}]\label{Definition: singular positivity on cpx sp}
    Let $X$ be a complex space. 
    We say that a singular Hermitian metric $h$ on $L$ is \textit{singular} \textit{positive} (resp. \textit{singular} \textit{semi}-\textit{positive}) if the weight function of $h$ with respect to any trivialization coincides with a strictly plurisubharmonic (resp. plurisubharmonic) function almost everywhere. 
\end{definition}

When $X$ is smooth, Definitions \ref{Definition: singular Hermitian metrics on cpx sp} and \ref{Definition: singular positivity on cpx sp} coincide with the existing definitions (see \cite[Chapter 3]{Dem12}, \cite{Wat25a,Wat25b}).

\subsection{Multiplier ideal sheaves, Lelong numbers and $\bb{R}$-line bundle}

In this subsection, we consider the smooth case, and let $X$ be a complex manifold 
and $L$ be a holomorphic line bundle with a singular Hermitian metric $h$ whose global weight $\varphi$ is quasi-plurisubharmonic, i.e., $h=h_0e^{-2\varphi}$ for some smooth Hermitian metric $h_0$ on $L$.

\begin{definition}[{\cite[Definition 2.3.9]{MM07}}]
    A real function $\varphi:X\longrightarrow[-\infty,+\infty)$ is said to have \textit{analytic} \textit{singularities} if $\varphi$ has locally the form 
    \begin{align*}
        \varphi=\frac{c}{2}\log\sum_{j\in J}|f_j|^2+\psi,
    \end{align*} 
    where $J$ is at most countable, $f_j$ are non-vanishing holomorphic functions, $\psi$ is a locally bounded function and $c\in\bb{R}_{>0}$.
    Here, the singular support of $\varphi$ and $\idd\varphi$ is an analytic subset.
    If $c\in\bb{Q}_{>0}$, then we furthermore say that $\varphi$ has \textit{algebraic} \textit{singularities}.
\end{definition}

Let $\varphi$ be a quasi-plurisubharmonic function on $X$.
We define the \textit{multiplier} \textit{ideal} \textit{sheaf} to be the ideal subsheaf $\mathscr{I}(\varphi)\subset\mathcal{O}_X$ of germs of holomorphic functions $f\in\mathcal{O}_x$ such that $|f|^2e^{-\varphi}$ is locally integrable near $x\in X$.
The multiplier ideal sheaf of $h$ is defined by $\scr{I}(h):=\scr{I}(\varphi)$.
The \textit{Lelong number} of $\varphi$ is defined by
\begin{align*}
    \nu(\varphi,x):=\liminf_{z\to x}\frac{\varphi(z)}{\log|z-x|}
\end{align*}
for some coordinate $(z_1,\ldots,z_n)$ around $x\in X$. 
The Lelong number of $h$ is defined by $\nu(h,x):=\nu(-\log h,x)/2=\nu(\varphi,x)$. 
For the relationship between the Lelong number of $\varphi$ and the integrability of $e^{-2\varphi}$, the following important result obtained by Skoda (see \cite[Lemma 5.6]{Dem12}) is known; 
If $\nu(\varphi,x)<1$ then $e^{-2\varphi}$ is integrable around $x$. From this, particularly if $\nu(h,x)<1$ then $\scr{I}(h)=\cal{O}_{X,x}$ immediately.

\begin{definition}
    Let $\varphi$ be a quasi-plurisubharmonic function on a complex manifold $X$.
    Let $\pi:Y\longrightarrow X$ be a proper modification of complex manifolds, and let $E$ be a prime divisor on $Y$. We define the \textit{divisorial Lelong number} of $\varphi$ along $E$ by 
    \begin{align*}
        \upsilon_E(\varphi):=\nu(\pi^*\varphi,E):=\inf_{y\in E}\nu(\pi^*\varphi,y), 
    \end{align*}
    and this equals the common value of $\nu(\pi^*\varphi,\eta_E)$ at a general point $\eta_E$ of $E$ (see \cite[Lemma 2.17]{Dem12}). 
    The divisorial Lelong number of $h$ along $E$ is defined by $\upsilon_E(h):=\upsilon_E(\varphi)$.
\end{definition}

We introduce the notion of $\bb{R}$-line bundles and their positivity. 
An $\bb{R}$-line bundle $L=\sum^k_{j=1}a_jL_j$ is a finite sum with some real numbers $a_1,\ldots,a_k$ and holomorphic line bundles $L_1,\ldots,L_k$. 
We say that $L=\sum^k_{j=1}a_jL_j$ is \textit{positive} on $X$ if there exist smooth Hermitian metrics $h_1,\ldots,h_k$ on $L_1,\ldots,L_k$ such that the curvature of the induced metric $h:=\prod^k_{j=1}h_j^{a_j}$ on $L$, which is explicity given by 
\begin{align*}
    \iO{L,h}=i\sum_{1\leq j\leq k}a_j\Theta_{L_j,h_j}, 
\end{align*}
is positive on $X$. Similarly, $\bb{Q}$-line bundles and their positivity are defined by taking the coefficients $a_j$ to be rational numbers.

\subsection{Positivity of curvature currents and bigness on compact space}

Singular positivity admits the following reformulation in terms of curvature currents. 

\begin{proposition}[{\cite[Proposition 3.1]{Wat26b}}]\label{Proposition: curvature condition of sHm}
    Let $X$ be a complex space of pure dimension and $L\longrightarrow X$ be a holomorphic line bundle with a singular Hermitian metric $h$. 
    We obtain the following relationship.
    \begin{itemize}
        \item If $h$ is singular semi-positive, then $\iO{L,h}\geq0$ on $X$ in the sense of currents. 
        \item If $h$ is singular positive, then for any Hermitian metric $\omega$ on $X$, there exists a positive smooth function $\varepsilon:X\longrightarrow\bb{R}_{>0}$ such that $\iO{L,h}\geq\varepsilon\omega$ on $X$ in the sense of currents. 
    \end{itemize}
    Furthermore, the converse also holds if $X$ is normal, or if any weight function is locally bounded from above in a neighborhood of $\xs$ and $X$ is locally irreducible.
\end{proposition}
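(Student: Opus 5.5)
The last statement is Proposition \ref{Proposition: curvature condition of sHm}, cited from \cite{Wat26b}. I would prove it by a local argument, using the local embeddings in the definition of plurisubharmonicity on complex spaces.

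\emph{Forward direction.} Suppose $h$ is singular semi-positive. Fix a smooth reference metric $h_0$ and write $h=h_0e^{-2\varphi}$.
\begin{itemize}
    \item Locally, in a trivialization on $U$, the weight $\phi$ of $h$ agrees almost everywhere with $\tilde\phi\circ\iota_U$, where $\tilde\phi$ is plurisubharmonic on an open set $V\subset\bb{C}^N$.
    \item Currents on $X$ are defined by duality with forms $\iota_U^*\alpha$, where $\alpha$ has compact support in $V$. Hence $\iO{L,h}=dd^c\phi$ acts on a test form by $\int_{U_{reg}}\phi\, dd^c(\iota^*\alpha)$.
    \item The function $\tilde\phi$ can be regularized by convolution into smooth psh functions $\tilde\phi_\nu\downarrow\tilde\phi$. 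Positivity of $dd^c\tilde\phi_\nu$ restricted to $U_{reg}$ passes to the limit by monotone convergence, since $\phi\in L^1_{loc}$. This gives $\iO{L,h}\ge0$.
    \item For the strictly psh case, write $\tilde\phi=\tilde\psi+c|z|^2$ locally with $\tilde\psi$ psh and $c>0$. This gives $\iO{L,h}\ge c\,\iota^*\omega_{\bb{C}^N}$ locally.
    \item Any Hermitian metric $\omega$ on $X$ is locally dominated by a multiple of $\iota^*\omega_{\bb{C}^N}$. A partition-of-unity argument then produces a positive smooth $\varepsilon$ with $\iO{L,h}\ge\varepsilon\omega$.
\end{itemize}

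\emph{Converse.} Suppose $\iO{L,h}\ge0$ as a current. On $X_{reg}$ the weight is locally $L^1$ with $dd^c\phi\ge0$, so it equals a psh function almost everywhere. This is its upper semicontinuous regularization $\phi^*(x)=\operatorname{ess\,lim\,sup}_{y\to x}\phi(y)$.

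\begin{itemize}
    \item \emph{Extension across $\xs$.} I would use the Grauert--Remmert extension theorem, which holds in two settings:
    \begin{itemize}
        \item if $X$ is normal, psh functions on $X_{reg}$ that are locally bounded above near $\xs$ extend uniquely as (weakly) psh functions;
        \item if $X$ is locally irreducible and the weight is bounded above near $\xs$, the same extension holds by hypothesis.
    \end{itemize}
    In the normal case, local boundedness from above near $\xs$ must itself be derived from $\phi\in L^1_{loc}$ and the psh property. I would use the sub-mean value inequality on a normalization or local parametrization (a finite branched cover over a polydisc), pushing forward to obtain a psh function on the base and bounding it above.
    \item \emph{Weak versus genuine psh.} By Fornæss--Narasimhan, weakly psh functions coincide with psh functions in the sense of the local-embedding definition.
    \item \emph{Strict case.} Apply the above to $\phi-\varepsilon' \rho$, where $\rho$ is a local strictly psh exhaustion pulled back from $\bb{C}^N$ and $\varepsilon'$ is small. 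The hypothesis $\iO{L,h}\ge\varepsilon\omega$ keeps this function psh, and adding $\varepsilon'\rho$ back gives a strictly psh weight.
\end{itemize}

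\emph{Main obstacle.} I expect the main difficulty to be the behaviour at $\xs$ in the converse: positivity as a current on $X$ only controls $\phi$ on $X_{reg}$. The normality or local irreducibility hypothesis must be used to obtain an upper bound near $\xs$ and a unique extension there. This is exactly the reason those hypotheses appear in the statement.
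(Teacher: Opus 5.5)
Your forward direction (ambient regularization $\tilde\phi_\nu\downarrow\tilde\phi$, integration over $X_{\mathrm{reg}}$, and the partition-of-unity choice of $\varepsilon$) is fine. So is your reduction of the strict converse to the semi-positive one via $\phi-\varepsilon'\,\iota^*|z|^2$.

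The gap is the normal case of the converse. You reduce it to the bounded-above form of the Grauert--Remmert extension theorem. You then say the upper bound near $X_{\mathrm{sing}}$ will be ``derived from $\phi\in L^1_{loc}$ and the psh property'' by pushing forward to a local parametrization. Those ingredients cannot give it. Take the cusp $X=\{y^2=x^3\}\subset\mathbb{C}^2$. It is locally irreducible, and $X_{\mathrm{sing}}=\{0\}$ has codimension one. Set $\phi(t^2,t^3)=\mathrm{Re}(1/t)$. This $\phi$ is $L^1_{loc}$ and harmonic on $X_{\mathrm{reg}}$, and $dd^c\phi$ even vanishes as a current on $X$. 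Indeed, on the $t$-line $dd^c\,\mathrm{Re}(1/t)$ is a first-order distribution supported at $0$, while every test function $g(t^2,t^3)$ has zero differential at $t=0$. Yet $\phi$ is unbounded above near the singular point. Pushing forward by $p(x,y)=x$ does not detect this. The fibrewise sum of $\phi$ is identically $0$, so a trace bounds nothing. The fibrewise maximum $|\mathrm{Re}(s^{-1/2})|$ is subharmonic only off $p(X_{\mathrm{sing}})=\{0\}$ and is unbounded there.

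So the argument must use a property of normality that the cusp lacks. The relevant one is $\operatorname{codim}X_{\mathrm{sing}}\ge 2$, which your proposal never invokes. With it, the boundedness step becomes unnecessary. By the other Grauert--Remmert theorem, on a normal space every psh function on the complement of an analytic subset of codimension $\ge 2$ extends uniquely to a psh function. Hence $\phi^*$ extends from $X_{\mathrm{reg}}$ directly, with no upper bound assumed.

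Equivalently, your parametrization argument can be repaired once you note that $p(X_{\mathrm{sing}})$ has codimension $\ge 2$ in the polydisc. The fibrewise maximum of $\phi^*$ is psh off the branch locus and locally bounded above near every point outside $p(X_{\mathrm{sing}})$. It therefore extends psh to the complement of $p(X_{\mathrm{sing}})$, then across $p(X_{\mathrm{sing}})$ by codimension, and so it is locally bounded above. The cusp example also explains why an upper-bound hypothesis is genuinely needed in the merely locally irreducible case of the statement.

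A smaller point: weakly psh functions coincide with psh functions only on locally irreducible spaces. On $\{zw=0\}$, the function equal to $0$ on one branch and $1$ on the other is weakly psh but not psh. This hypothesis holds in both cases of the statement, since normal spaces are locally irreducible. It should still be stated as part of the argument rather than attributed to Forn\ae ss--Narasimhan unconditionally.
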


Let $X$ be a complex space and $L\longrightarrow X$ be a holomorphic line bundle. 
We define $L$ to be \textit{big} if the Kodaira-Iitaka dimension of $L$ is maximal on each irreducible component. 
A compact complex space $X$ is said to be \textit{Moishezon} if the algebraic dimension of each irreducible component $X_j$ is maximal (cf. \cite{Moi66}).  
If $L$ is big, then $X$ is Moishezon, and every Moishezon space is bimeromorphic to a projective manifold (see \cite{Moi66}).
Thus, since a compact Moishezon space contains sufficiently many curves, we define a line bundle to be \textit{nef} as usual in terms of the non-negativity of its intersection numbers.
Furthermore, nefness is preserved under pullbacks by resolution of singularities.

When $X$ is a projective manifold, Demailly proved that a line bundle is big (resp. pseudo-effective) if and only if it admits a singular positive (resp. singular semi-positive) Hermitian metric (see \cite{Dem90}). 
The notion of bigness defined above was later shown to admit the same characterization on compact complex manifolds without assuming projectivity (see \cite[Theorem 2.3.30]{MM07}), by combining Demailly's approximation with the holomorphic Morse inequalities. 
However, when the space has singularities, it seems to be unknown whether bigness and singular positivity are equivalent.

\begin{theorem}[{\cite[Theorem 3.3]{Wat26b}}]\label{Theorem: characterization of big and singular positive}
    Let $X$ be a compact complex space and $L\longrightarrow X$ be a holomorphic line bundle. 
    We have the following relationship.
    \begin{itemize}
        \item If $L$ is singular positive, then $L$ is big. 
        \item If $L$ is big, then there exists a singular Hermitian metric $h$ on $L$ such that $\iO{L,h}\geq\gamma$ on $\reg$ in the sense of currents for some Hermitian metric $\gamma$ on $X$.
        
        Furthermore, if $X$ is normal, then we can take the singular Hermitian metric $h$ with $\iO{L,h}\geq\gamma$ on $X$ in the sense of currents 
        on $X$, i.e., $h$ is singular positive. 
    \end{itemize}
    Thus, if $X$ is normal, then singular positivity and bigness coincide.
\end{theorem}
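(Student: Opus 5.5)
The plan is to treat the two implications separately, passing in both directions through a resolution of singularities $\mu:\hx\longrightarrow X$. I choose $\mu$ to be an isomorphism over $\reg$ and to be a composition of blow-ups with smooth centers, so that $\exc(\mu)$ is a divisor. On the compact complex manifold $\hx$, bigness and singular positivity are equivalent by \cite[Theorem 2.3.30]{MM07}. The task is therefore to transport the relevant objects between $X$ and $\hx$.

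\emph{Singular positive $\Rightarrow$ big.}
\begin{enumerate}
\item Let $h$ be singular positive on $L$, fix a Hermitian metric $\omega$ on $X$, and let $\varepsilon\omega$ be the lower bound on $\iO{L,h}$ given by Proposition \ref{Proposition: curvature condition of sHm}. Locally the weight of $h$ is a strictly psh function on an ambient open set $V\subset\bb{C}^N$. Pulling back, $\mu^*h$ has quasi-psh weight and $\iO{\mu^*L,\mu^*h}\geq\mu^*(\varepsilon\omega)$, which is semipositive and strictly positive off $\exc(\mu)$.
\item Since $\mu$ is a sequence of blow-ups, there is an effective $\mu$-exceptional divisor $F$ and a smooth metric on $\cal{O}_{\hx}(-F)$ whose curvature is $\geq-C\mu^*\omega$ and is positive in the fibre directions.
\item For $m$ large, $\mu^*L^{\otimes m}\otimes\cal{O}_{\hx}(-F)$ then carries a singular metric with a strictly positive curvature current. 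By \cite[Theorem 2.3.30]{MM07} it is big, and hence so is $\mu^*L$.
\item To return to $X$, which need not be normal, let $\nu:X^\nu\longrightarrow X$ be the normalization and $\cal{C}$ the conductor. Since $\cal{C}$ is an ideal in both $\cal{O}_X$ and $\nu_*\cal{O}_{X^\nu}$, we have $H^0(X^\nu,\nu^*L^{\otimes m}\otimes\cal{C})\subset H^0(X,L^{\otimes m})$.
\item Sections on the normal space $X^\nu$ coincide with sections of the pullback to $\hx$, so $h^0(X^\nu,\nu^*L^{\otimes m})\geq cm^n$. The quotient by $\cal{C}$ is supported in dimension $\leq n-1$ and so costs only $O(m^{n-1})$. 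This gives $h^0(X,L^{\otimes m})\geq c'm^n$ on each component, so $L$ is big.
\end{enumerate}

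\emph{Big $\Rightarrow$ metric.}
\begin{enumerate}
\item If $L$ is big, then $\mu^*L$ is big: sections pull back injectively on each component. By \cite[Theorem 2.3.30]{MM07}, $\mu^*L$ carries $\wh{h}$ with $\iO{\mu^*L,\wh{h}}\geq\wh{\omega}$ for a Hermitian metric $\wh{\omega}$ on $\hx$.
\item Fix a Hermitian metric $\gamma_0$ on $X$. Since $\hx$ is compact and $\mu^*\gamma_0$ is smooth, $\mu^*\gamma_0\leq C\wh{\omega}$.
\item Define $h$ on $X$ by transporting $\wh{h}$ through the isomorphism $\hx\setminus\mu^{-1}(\xs)\cong\reg$. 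On $\reg$ this gives $\iO{L,h}\geq C^{-1}\gamma_0=:\gamma$.
\item For $h$ to be a singular Hermitian metric we need the weight to be in $L^1_{loc}(X)$. This holds because the weight is the push-forward of a quasi-psh, hence locally integrable, function on the compact $\hx$. Integrability against any local volume on $X$ follows from integrability upstairs against $\mu^*$ of that volume, since $\xs$ has measure zero.
\end{enumerate}

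\emph{The normal case, which I expect to be the main obstacle.} Here the current inequality must be extended across $\xs$. The local weights of $h$ are quasi-psh on $\hx$ and hence locally bounded above. After subtracting a smooth local potential of $\gamma$, chosen as the restriction of a strictly psh function from an ambient chart, the weight becomes psh on $\reg$ and locally bounded above near $\xs$. By the Grauert--Remmert extension theorem for psh functions on normal spaces, it extends uniquely to a psh function on $X$. Adding back the potential of $\gamma$ shows that the weight agrees almost everywhere with a strictly psh function, i.e.\ $h$ is singular positive; equivalently, $\iO{L,h}\geq\gamma$ on $X$ by the converse part of Proposition \ref{Proposition: curvature condition of sHm}.

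The delicate points are the following:
\begin{itemize}
\item ensuring the local potential of $\gamma$ is genuinely strictly psh in the embedded sense;
\item checking that the extension is compatible with the weight defined almost everywhere.
\end{itemize}
Together with the first implication, this shows that bigness and singular positivity coincide for normal $X$.
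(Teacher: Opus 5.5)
The paper does not prove this statement; it is imported from \cite[Theorem 3.3]{Wat26b}, so there is no in-text proof to compare with. Your argument is correct. It uses the same ingredients the paper deploys elsewhere:
\begin{itemize}
\item Demailly's characterization on the manifold $\hx$ via \cite[Theorem 2.3.30]{MM07};
\item transport of the metric through $\mu|_{\hx\setminus\exc(\mu)}$, which is exactly how $\hbar$ is built in the proof of Theorem~\ref{Theorem: Steenbrink vanishing for singular positive in not Introduction} and then reused for Theorem~\ref{Theorem: Steenbrink vanishing for big in Introduction};
\item the negativity-lemma divisor $F$ compensating the degeneracy of $\mu^*\omega$, as in the proof of Theorem~\ref{Theorem: Blow-ups and Q-line bundles}.
\end{itemize}
The one place you go beyond the paper is the descent from $\hx$ to $X$. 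Where the paper needs such a transfer, it simply asserts that bigness is a birational invariant. Your conductor count actually proves $\mu^*L$ big $\Rightarrow$ $L$ big for non-normal $X$. For that step you should cite the estimate $h^0(\cal{F}\otimes\nu^*L^{\otimes m})=O(m^{\dim\mathrm{supp}\,\cal{F}})$ for coherent sheaves on compact complex spaces. You should also note that $\mu$ factors as $\nu\circ\mu'$ with $\mu'_*\cal{O}_{\hx}=\cal{O}_{X^\nu}$, by normality of $X^\nu$.

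Two small repairs are needed in the normal case.
\begin{itemize}
\item A Hermitian metric $\gamma$ on $X$ need not be closed, so in general it has no local $\idd$-potential. Instead, in an ambient chart choose $c>0$ with $c\,\idd|z|^2\leq\gamma$ on $\reg$. Then $\varphi-c|z|^2$ is psh on $\reg$ and bounded above near $\xs$. Its psh extension $u$ is psh in the embedded sense of the paper's definition by Forn\ae ss--Narasimhan. Hence $\varphi=u+c|z|^2$ almost everywhere, and $u+c|z|^2$ is strictly psh in that sense. This settles both delicate points you list.
\item Passing from singular positivity to $\iO{L,h}\geq\gamma$ on $X$ uses the forward direction of Proposition~\ref{Proposition: curvature condition of sHm}, not its converse. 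The function $\varepsilon$ there is bounded below by compactness.
\end{itemize}
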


\subsection{Logarithmic $L^2$-Dolbeault resolusion and isomorphism}

In this subsection, we introduce the logarithmic $L^2$-Dolbeault resolution on complex manifolds. 
The analytic logarithmic $L^2$-Dolbeault resolution for line bundles on compact complex manifolds was established in \cite{HLWY23}. 
By the same argument, we can extend it to holomorphic vector bundles over relatively compact subsets as follows.

Let $M$ be a complex manifold, $D$ be a simple normal crossing divisor and $\omega_D$ be a smooth \kah metric on $M\setminus D$. 
Let $h_F$ be a smooth Hermitian metric on a vector bundle $F|_{M\setminus D}$. The sheaf $\scr{L}^{p,q}_{F,h_F,\,\omega_D}$ over $M$ is defined as follows: 
For any open subset $U$ of $M$, the section space $\scr{L}^{p,q}_{F,h_F,\,\omega_D}(U)$ consists of $F$-valued $(p,q)$-forms $u$ witn measurable coefficients such that $|u|^2_{h_F,\,\omega_D}dV_{\omega_D}$ and $|\dbar u|^2_{h_F,\,\omega_D}dV_{\omega_D}$ are integrable on $U\setminus D$. 

\begin{theorem}[{cf. \cite[Theorem 3.1]{HLWY23}}]\label{Theorem: Logarithmic L2-Dolbeault resolusion}
    Let $Y$ be a complex manifold, not necessarily compact. Let $X\Subset Y$ be a relatively compact open subset, and let $\omega$ be a \kah metric on $X$. 
    Let $D=\sum^s_{j=1} D_j$ be a simple normal crossing divisor on a neighborhood of $\overline{X}$, and let $\omega_P$ be a smooth \kah metric on $X\setminus D$ which is of Poincar\'{e}-type along $D$. 
    Let $F$ be a holomorphic vector bundle on $X$ with a smooth Hermitian metric $h_F$ and $\sigma_j$ be the defining section of $D_j$. Fix smooth Hermitian metrics $h_j$ on $\cal{O}_X(D_j)$.  
    
    Then, there exists a sufficiently large integer $\alpha>0$ depending on the compactness of $X$ such that, for any fixed constants $\tau_j\in (0,1]$, the smooth Hermitian metric $h_{\alpha,\tau_j}$ on $F|_{X\setminus D}$ defined by 
    \begin{align*}
        h^F_{\tau,\alpha}=h_F\prod^s_{j=1}|\sigma_j|^{2\tau_j}_{h_j}(\log |\sigma_j|^2_{h_j})^{2\alpha}
    \end{align*}
    has the property that, for any $p\geq0$, the logarithmic $L^2$-Dolbeault complex 
    \begin{align*}
        0\longrightarrow \Omega_X^p(\log D)\otimes\cal{O}_X(F)\hookrightarrow\scr{L}^{p,0}_{F,h^F_{\tau,\alpha},\,\omega_P}\overset{\dbar}{\longrightarrow}\scr{L}^{p,1}_{F,h^F_{\tau,\alpha},\,\omega_P}\overset{\dbar}{\longrightarrow}\scr{L}^{p,2}_{F,h^F_{\tau,\alpha},\,\omega_P}\overset{\dbar}{\longrightarrow}\cdots
    \end{align*}
    is exact on $X$; that is, the complex $(\scr{L}^{p,\ast}_{F,h^F_{\tau,\alpha},\,\omega_P},\dbar)$ is logarithmic $L^2$-Dolbeault fine resolution of $\Omega_X^p(\log D)\otimes\cal{O}_X(F)$.
    Thus, we have the logarithmic $L^2$-Dolbeault isomorphism 
    \begin{align*}
        H^q(X,\Omega_X^p(\log D)\otimes F)\cong H^q\Bigl(\Gamma\bigl(X\setminus D,\mathscr{L}^{p,\ast}_{F,h^F_{\tau,\alpha},\,\omega_P}\bigr)\Bigr)
    \end{align*}
    for any $p,q\geq0$.
\end{theorem}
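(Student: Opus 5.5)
Our plan is to prove the statement following the approach of \cite[Theorem 3.1]{HLWY23}, where it is proved for compact manifolds and line bundles, and to check that nothing there depends on compactness of the ambient space or on $F$ having rank one. Exactness is a local question. The inclusion $\Omega_X^p(\log D)\otimes\cal{O}_X(F)\hookrightarrow\scr{L}^{p,0}$ is checked by a direct estimate, and exactness in positive degrees is obtained by a local $L^2$ solution of $\dbar$ on small polydiscs. Both are carried out near an arbitrary point $x_0\in\overline{X}$. Since $\overline{X}$ is compact, finitely many coordinate charts suffice, and $\alpha$ can be chosen uniformly over them. This is the only sense in which $\alpha$ depends on ``the compactness of $X$''.

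\textbf{Step 1 (local model and inclusion).} Near $x_0$ we choose coordinates $z$ on a polydisc $\Delta^n$ with $D=\{z_1\cdots z_k=0\}$, and we trivialize $F$ by a holomorphic frame. After shrinking, $h_F$ is quasi-isometric to the trivial metric, so locally the problem reduces to $\mathrm{rank}\,F$ copies of the trivial line bundle with weight
\begin{align*}
\prod_{j\le k}|z_j|^{2\tau_j}(\log|z_j|^2)^{2\alpha}.
\end{align*}
The $h_j$ are comparable to the flat metrics, and $\omega_P$ is quasi-isometric to the model Poincar\'e metric
\begin{align*}
\sum_{j\le k}\frac{i\,dz_j\wedge d\bar z_j}{|z_j|^2\log^2|z_j|^2}+\sum_{j>k}i\,dz_j\wedge d\bar z_j.
\end{align*}
In this model the logarithmic forms $dz_j/z_j$ have bounded pointwise norm, and $dV_{\omega_P}$ carries the factor $\prod_j|z_j|^{-2}\log^{-2}|z_j|^2$. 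Hence a holomorphic logarithmic form has integrand of size
\begin{align*}
\prod_j|z_j|^{2\tau_j-2}(\log|z_j|^2)^{2\alpha-2},
\end{align*}
which is integrable because $\tau_j>0$. Conversely, an $L^2$ holomorphic section on $\Delta^n\setminus D$ with these weights extends as a section of $\Omega^p(\log D)$. To see this, we expand it in Laurent series and observe that any more singular term, for instance a factor $z_j^{-1}\,dz_j/z_j$, is not integrable, since $\tau_j\le1$ and the logarithmic factor cannot compensate for a full power of $|z_j|^{-2}$. This gives exactness at degree $0$.

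\textbf{Step 2 ($\dbar$-exactness in degree $q\ge1$).} This is the main obstacle. We apply H\"ormander's $L^2$ estimate for the complete K\"ahler metric $\omega_P$ on $\Delta^n\setminus D$, which is complete near $D$; near the outer boundary we use a smaller polydisc together with a strictly psh exhaustion $|z|^2$ term. The curvature of the weight is
\begin{align*}
\idd\Big(-\tau_j\log|z_j|^2-2\alpha\log(-\log|z_j|^2)\Big)=2\alpha\,\frac{i\,dz_j\wedge d\bar z_j}{|z_j|^2\log^2|z_j|^2},
\end{align*}
where the $\tau_j$ term is pluriharmonic off $D$. This equals $2\alpha$ times the Poincar\'e part of $\omega_P$. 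Adding $\idd|z|^2$ then yields a weight whose curvature dominates $c\,\omega_P$ for a constant $c>0$ that grows with $\alpha$.

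For $(p,q)$-forms with $p$ arbitrary, the Bochner--Kodaira--Nakano term involves the curvature of $K^{-1}$ twisted by $\Lambda^p$, together with the curvature of $\omega_P$ itself. The Ricci and torsion-free contributions of $\omega_P$ are bounded with respect to $\omega_P$, because Poincar\'e-type metrics have bounded geometry. Choosing $\alpha$ large therefore absorbs these bounded negative terms, as well as the curvature of $h_F$ and $h_j$ measured against $\omega_P$. These quantities are bounded uniformly on a neighborhood of $\overline{X}$. With this choice, every $\dbar$-closed $L^2$ form $v$ on a smaller polydisc equals $\dbar u$ for some $u$ satisfying $\|u\|\le C\|v\|$. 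We take $\alpha$ to be the maximum over the finitely many charts.

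\textbf{Step 3 (fineness and isomorphism).} Multiplication by smooth cutoff functions on $M$ preserves $\scr{L}^{p,q}$: the terms $\dbar\chi$ are bounded in $\omega_P$-norm, since $|d\chi|_{\omega_P}\lesssim|d\chi|_{\omega}$. The sheaves are therefore fine, and the resolution is acyclic. The logarithmic $L^2$-Dolbeault isomorphism then follows from the abstract de Rham--Weil theorem on $X$.
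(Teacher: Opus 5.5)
Your proposal is correct and takes the paper's route: the paper gives no separate proof, only the assertion that the argument of \cite[Theorem 3.1]{HLWY23} carries over verbatim to vector bundles on relatively compact $X\Subset Y$, and your Steps 1--3 (weighted $L^2$ holomorphic sections are exactly logarithmic forms because $0<\tau_j\le 1$, local $L^2$ solvability of $\dbar$ with $\alpha$ large, fineness via cutoffs, $\alpha$ uniform by compactness of $\overline{X}$) are precisely that verification. There are two harmless slips. First, $|dz_j/z_j|_{\omega_P}\sim|\log|z_j|^2|$ is unbounded, but this only adds logarithmic factors, so Step 1 still works since $\tau_j>0$. Second, in Step 2 you should run the estimate for the flat model data (model Poincar\'e metric, trivial $h_F$, flat $h_j$) rather than for $\omega_P$ itself, because a metric merely quasi-isometric to the model need not have bounded curvature and the weight term $2\alpha\,\Theta_{h_j}/\log|\sigma_j|^2_{h_j}$ grows with $\alpha$; this replacement is legitimate because the sheaves $\scr{L}^{p,q}$ do not change when $\omega_P$, $h_F$ and the weight are replaced by locally quasi-isometric ones.
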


If $h_F$ is a singular Hermitian metric, then, assuming that $h_F$ has an appropriate (quasi-)semi-positivity property, such as Griffiths or Nakano positivity, 
by taking $\tau_j=1$, we can similarly obtain a logarithmic $L^2$-resolution for $\Omega_X^p(\log D)$ twisted by the $L^2$-subsheaf $\scr{E}(h_F)$ with respect to the singular Hermitian metric $h^F_{\alpha,\tau}$ constructed in the same way (see \cite[Theorems 3.2 and 3.7]{Wat26a}). 
Furthermore, if there exists a constant $0<\delta\leq1$ such that $\nu(\det h_F,x)<\delta/2$ for all points $x\in D$, then we can take $\tau_j\in(\delta,1]$.

\section{Refined Demailly's approximation and positive $\bb{Q}$-line bundles}

In this section, we prove Theorem \ref{Theorem: positivity of Q-line bundle in Introduction} by using the following refined version of Demailly's approximation, 
obtained via the strong openness property, which preserves the multiplier ideal sheaf and has algebraic singularities. 

\begin{theorem}[{=\,\cite[Theorem\,3.2]{Wat24}}]\label{Theorem: Demailly approximation with alg sing and ideal sheaves}
    Let $X$ be a complex manifold equipped with a Hermitian metric $\omega$ and $T=\alpha+\idd\varphi$ be a closed $(1,1)$-current on $X$ 
    where $\alpha$ is a smooth closed $(1,1)$-form and $\varphi$ is a quasi-plurisubharmonic function. Assume that $T=\alpha+\idd\varphi\geq \gamma$ holds for a continuous real $(1,1)$-form $\gamma$ on $X$. 
    Then, for a relatively compact subset $K\Subset X$, there exist an increasing sequence of positive integers $\{m_\nu\}_{\nu\in\mathbb{N}}$ and a sequence of quasi-plurisubharmonic functions $\{\varphi_{m_\nu}\}_{\nu\in\bb{N}}$ on $K$ such that the following are satisfied.
    \begin{itemize}
        \item [$(a)$] $\varphi_{m_\nu}$ has algebraic singularities. 
        That is, locally $\varphi_{m_\nu}$ can be expressed as
            \begin{align*}
                \varphi_{m_\nu}=\frac{1+2^{-\nu}}{2m_\nu}\log\sum_j|\sigma_{m_\nu,j}|^2+\varPhi_{m_\nu},
            \end{align*}
            where $\{\sigma_{m_\nu,j}\}_{j\in\bb{N}}$ is an orthonormal basis of the Hilbert space $\cal{H}(m_\nu(\varphi+c|z|^2))$ for some real number $c$ and local coordinates $z$ and $\varPhi_{m_\nu}$ is smooth. 
        \item [$(b)$] $\varphi_{m_\nu}$ is smooth on $K\setminus Z_\nu$, where the set $Z_\nu$ of logarithmic poles of $\varphi_{m_\nu}$ is an analytic subset of $K$, satisfying $Z_\nu\subset Z_{\nu+1}$, and is locally obtained by $\bigcap_j \sigma_{m_\nu,j}^{-1}(0)$. 
        \item [$(c)$] there exists a large $\nu_0$ such that $\scr{I}(\varphi)=\scr{I}(\varphi_{m_\nu})$ on $K$ for any $\nu\geq \nu_0$. 
        \item [$(d)$] $T_{m_\nu}:=\alpha+\idd\varphi_{m_\nu}$ satisfies $T_{m_\nu}\geq\gamma-\varepsilon_\nu\omega$, where $\{\varepsilon_\nu\}_{\nu_0\leq\nu\in\bb{N}}$ is a decreasing sequence of positive numbers with $\lim_{\nu\to+\infty}\varepsilon_\nu=0$.
        \item [$(e)$] we obtain the following inequality related to Lelong numbers 
        \begin{align*}
            \nu(\varphi,x)-\frac{n}{m_\nu}\leq\nu\Big(\frac{\varphi_{m_\nu}}{1+2^{-\nu}},x\Big)\leq\nu(\varphi,x) \quad\text{ for any } x\in K.
        \end{align*}
    \end{itemize}

    In particular, for any $t>0$ there exists an enough large integer $\nu(t)\in\bb{N}$ such that $\scr{I}(t\varphi)=\scr{I}(t\varphi_{m_\nu})$ for any $\nu\geq\nu(t)$.
\end{theorem}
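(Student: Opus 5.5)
The plan is to follow Demailly's Bergman kernel approximation, then use the strong openness property of Guan--Zhou to recover the multiplier ideal sheaf. First shrink to a neighborhood of $\overline{K}$ and cover it by finitely many coordinate balls $B_k$. On each ball replace $\varphi$ by $\varphi+c|z|^2$, with $c$ chosen so that $\idd(\varphi+c|z|^2)\geq0$; this is possible since $\alpha+\idd\varphi\geq\gamma$ and $\alpha,\gamma$ are continuous. Let $\cal{H}(m(\varphi+c|z|^2))$ be the Hilbert space of holomorphic $f$ on $B_k$ with $\int|f|^2e^{-2m(\varphi+c|z|^2)}<\infty$, and put
\begin{align*}
    \psi_{m,k}=\frac{1}{2m}\log\sum_j|\sigma_{m,j}|^2 .
\end{align*}
Two standard estimates drive the whole argument. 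The mean value inequality gives $\psi_{m,k}(x)\leq\sup_{B(x,r)}(\varphi+c|z|^2)+\frac1m\log\frac{C}{r^n}$. The Ohsawa--Takegoshi extension theorem gives $\varphi+c|z|^2\leq\psi_{m,k}+\frac{C}{m}$. Together they yield the Lelong number inequality $(e)$ in the form $\nu(\varphi,x)-\frac nm\leq\nu(\psi_{m,k},x)\leq\nu(\varphi,x)$. They also show that the functions $\psi_{m,k}-c|z|^2$ on overlapping balls differ by $O(\frac1m\log\frac1r)$ plus terms that tend to zero.

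Next I glue the local functions. The plan is Demailly's regularized-maximum gluing: add small bump corrections so that on each overlap one chart strictly dominates near the boundary of the other. The glued function $\varphi_m$ keeps the local form
\begin{align*}
    \frac{1}{2m}\log\sum_j|\sigma_{m,j}|^2+\text{smooth}.
\end{align*}
The loss in positivity is controlled by the modulus of continuity of $\alpha$ and $\gamma$ on the small balls and by $O(1/m)$, which gives $T_m\geq\gamma-\varepsilon_m\omega$. Multiplying the logarithmic part by $(1+2^{-\nu})$ changes the curvature by at most $2^{-\nu}$ times a bounded quantity. This is because $\idd\log\sum|\sigma_j|^2\geq0$ and the correction $c|z|^2$ is smooth, so it only adds a term of size $C2^{-\nu}\omega$ to $\varepsilon_\nu$. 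This gives $(a)$, $(d)$ and $(e)$. Item $(b)$ is immediate from the local form: the singular set is the common zero set of the $\sigma_{m,j}$. To obtain $Z_\nu\subset Z_{\nu+1}$, I choose $m_{\nu+1}$ to be a multiple of $m_\nu$. Subadditivity of the Bergman spaces, via the Ohsawa--Takegoshi extension argument on the diagonal, then shows that common zeros persist upward.

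The main obstacle is $(c)$, the equality $\scr{I}(\varphi)=\scr{I}(\varphi_{m_\nu})$ once the factor $(1+2^{-\nu})$ is included. One inclusion comes from Ohsawa--Takegoshi: $\varphi\leq\varphi_m+C/m$, so $\varphi_m$ is less singular and $\scr{I}(\varphi)\subset\scr{I}(\varphi_m)$. The factor $(1+2^{-\nu})$ then has to be absorbed. For the reverse inclusion I would use strong openness, $\scr{I}(\varphi)=\bigcup_{\delta>0}\scr{I}((1+\delta)\varphi)$. By coherence and the Noetherian property on the compact set $\overline{K}$, this union stabilizes, so $\scr{I}(\varphi)=\scr{I}((1+\delta_0)\varphi)$ for some fixed $\delta_0>0$. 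Then I compare $(1+2^{-\nu})\varphi_{m_\nu}$ with $\varphi$ by a H\"older argument: the membership $\sum_j|\sigma_{m,j}|^2e^{-2m\varphi}\in L^1$ controls $\int|f|^2e^{-2(1+2^{-\nu})\varphi_m}$ in terms of $\int|f|^2e^{-2(1+\delta_0)\varphi}$ once $2^{-\nu}$ is small and $m$ is large relative to $\delta_0$. This is the step where I would spend the most care. Having fixed $\nu_0$, the final assertion for a general $t>0$ follows by applying the same argument to $t\varphi$, whose strong-openness exponent $\delta_0(t)$ gives the threshold $\nu(t)$.
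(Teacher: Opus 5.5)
Your ingredients (Bergman kernel approximation, the two-sided Ohsawa--Takegoshi and mean-value estimates, strong openness, a H\"older argument) are the right ones. In $(c)$, however, you attach them to the wrong inclusions, and as written the inclusion $\scr{I}\big((1+2^{-\nu})\varphi_{m_\nu}\big)\subset\scr{I}(\varphi)$ is never proved.

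Write $\psi_m=\frac{1}{2m}\log\sum_j|\sigma_{m,j}|^2$ and $\epsilon=2^{-\nu}$, so that $\varphi_{m_\nu}$ is $(1+\epsilon)\psi_{m_\nu}$ up to bounded terms. The inclusion that needs strong openness is $\scr{I}(\varphi)\subset\scr{I}((1+\epsilon)\psi_m)$; this is exactly the ``absorption of the factor'' you left unexplained. It follows pointwise, with no H\"older, from $\scr{I}(\varphi)=\scr{I}((1+\delta_0)\varphi)\subset\scr{I}((1+\epsilon)\varphi)\subset\scr{I}((1+\epsilon)\psi_m)$ for $\epsilon\le\delta_0$, using $\varphi\le\psi_m+C/m$. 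Your H\"older paragraph bounds $\int|f|^2e^{-2(1+\epsilon)\varphi_m}$ by $\int|f|^2e^{-2(1+\delta_0)\varphi}$, so it only re-proves this same inclusion.

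The reverse inclusion needs the estimate in the opposite direction and does not use strong openness at all. Take $p=1+\epsilon$ and $q=(1+\epsilon)/\epsilon$; then
\[
\int|f|^2e^{-2\varphi}=\int\big(|f|^2e^{-2\psi_m}\big)e^{2(\psi_m-\varphi)}\le\Big(\int|f|^{2p}e^{-2p\psi_m}\Big)^{1/p}\Big(\int e^{2q(\psi_m-\varphi)}\Big)^{1/q}.
\]
Here $|f|^{2p}\le C|f|^2$ locally. Since $e^{2qs}\le 1+e^{2ms}$ for all real $s$ once $q\le m$, the last integral is controlled by that of $e^{2m(\psi_m-\varphi)}$. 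So the condition is $m_\nu\ge 2^\nu+1$, that is, $m_\nu$ large relative to $2^\nu$, not relative to $\delta_0$.

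Two further points. First, the integrability you invoke, of $e^{2m(\psi_m-\varphi)}=\sum_j|\sigma_{m,j}|^2e^{-2m\varphi}$, is false on the whole ball: its integral is comparable to $\sum_j\|\sigma_{m,j}\|^2=+\infty$. It holds only on a shrunken ball, after the strong Noetherian property gives $\sum_j|\sigma_{m,j}|^2\le C\sum_{j\le N}|\sigma_{m,j}|^2$ there. This is the Demailly--Peternell--Schneider truncation recalled in the proof of Theorem~\ref{Theorem: Blow-ups and Q-line bundles}, and it has to be part of the argument.

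Second, the final assertion concerns the \emph{same} sequence $\varphi_{m_\nu}$, so rerunning the construction for $t\varphi$ does not prove it. Instead:
\begin{itemize}
\item strong openness for $t\varphi$ together with $t\varphi\le t\psi_m+tC/m$ gives $\scr{I}(t\varphi)\subset\scr{I}(t\varphi_{m_\nu})$ for $\nu$ large;
\item the H\"older step with weight $t\varphi$ needs $tq\le m_\nu$, i.e.\ $m_\nu\ge t(2^\nu+1)$.
\end{itemize}
Hence $m_\nu$ must be chosen with $m_\nu 2^{-\nu}\to+\infty$, a constraint your construction never imposes.
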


\begin{theorem}\label{Theorem: Blow-ups and Q-line bundles}
    Let $X$ be a complex manifold equipped with a Hermitian metric $\omega$ and $L\longrightarrow X$ be a holomorphic line bundle equipped with a singular Hermitian metric $h$. Let $V$ be a relatively compact open subset of $X$.
    If $h$ is singular positive on an open neighborhood of $\overline{V}$, i.e., $\iO{L,h}\geq\varepsilon\omega$ on an open neighborhood of $\overline{V}$ in the sense of currents for a smooth positive function $\varepsilon:X\longrightarrow\bb{R}_{>0}$, then 
    there exists a singular Hermitian metric $\hbar$ with algebraic singularities given by a refined Demailly approximation of $h$ in Theorem \ref{Theorem: Demailly approximation with alg sing and ideal sheaves} such that the following conditions are satisfied. 
    \begin{itemize}
        \item [$(a)$] $\scr{I}(h)=\scr{I}(\hbar)$ on $V$.
        \item [$(b)$] $\iO{L,\hbar}\geq\varepsilon\omega/2$ in the sense of currents on $V$. 
    \end{itemize}
    
    Let $Z$ be the analytic subset of $V$ given by the singular locus of $\hbar$ and $\pi:\tl{V}\longrightarrow V$ be a log resolution of $Z$ such that $E=\sum_{j\in J}E_j:=\pi^{-1}(Z)_{red}$ is a simple normal crossing divisor. 
    Then, the following conditions hold:
    \begin{itemize}
        \item [$(\alpha)$] for divisorial Lelong numbers $\upsilon_{E_j}(\hbar):=\nu(\pi^*\hbar,E_j)$, we obtain 
        \begin{align*}
            \scr{I}(\pi^*\hbar)=\cal{O}_{\tl{V}}\Big(\!-\sum_{j\in J}\lfloor\upsilon_{E_j}(\hbar)\rfloor E_j\Big).
        \end{align*}
        \item [$(\beta)$] there exist nonnegative rational numbers $\delta_j\in[\upsilon_{E_j}(\hbar)-\lfloor\upsilon_{E_j}(\hbar)\rfloor,1)\cap\bb{Q}\subset[0,1)$, $j\in J$, such that the following $\bb{Q}$-line bundle is positive on $\tl{V}$. 
        \begin{align*}
            \qquad\qquad\pi^*L\otimes\scr{I}(\pi^*\hbar)\otimes\cal{O}_{\tl{V}}\Big(\!-\sum_{j\in J}\delta_jE_j\Big)=\pi^*L\otimes\cal{O}_{\tl{V}}\Big(\!-\sum_{j\in J}(\lfloor\upsilon_{E_j}(\hbar)\rfloor+\delta_j)E_j\Big)
        \end{align*}
    \end{itemize}
\end{theorem}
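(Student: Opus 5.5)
We apply Theorem \ref{Theorem: Demailly approximation with alg sing and ideal sheaves} to $T=\iO{L,h_0}+\idd\varphi$, where $h=h_0e^{-2\varphi}$, with $\gamma=\varepsilon\omega$ and $K$ a relatively compact neighborhood of $\overline{V}$ inside the open set where $\iO{L,h}\geq\varepsilon\omega$. Since $\varepsilon$ is bounded below by a positive constant $c_0$ on $\overline{K}$, we may choose $\nu$ large enough that $\varepsilon_\nu\leq c_0/2$. Then $\nu\geq\nu_0$ gives $\scr{I}(\varphi_{m_\nu})=\scr{I}(\varphi)$. We set $\hbar:=h_0e^{-2\varphi_{m_\nu}}$. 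This yields $(a)$ and $(b)$ directly from items $(c)$ and $(d)$, and $\hbar$ has algebraic singularities by $(a)$ of that theorem.

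For $(\alpha)$, write locally $\varphi_{m_\nu}=\frac{c}{2}\log\sum_j|\sigma_j|^2+\varPhi$ with $c=(1+2^{-\nu})/m_\nu\in\bb{Q}_{>0}$. Since $\pi$ is a log resolution of $Z$, we may assume it principalizes the ideal generated by the $\sigma_j$. That is, $\pi^*(\sigma_j)=\cal{O}_{\tl V}(-\sum a_jE_j)$ with $a_j\in\bb{Z}_{\geq0}$, so that
\begin{align*}
\pi^*\varphi_{m_\nu}=\sum_j c\,a_j\log|s_j|+\text{bounded smooth}.
\end{align*}
Here $s_j$ are local equations of $E_j$, and $\upsilon_{E_j}(\hbar)=c\,a_j\in\bb{Q}$. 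The standard snc integrability computation (Fubini on $\prod|z_j|^{-2\upsilon_j}$) then gives $\scr{I}(\pi^*\hbar)=\cal{O}(-\sum\lfloor\upsilon_{E_j}\rfloor E_j)$.

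For $(\beta)$, note that the metric $\pi^*\hbar\cdot\prod|s_j|^{-2\upsilon_{E_j}}_{h_j}$ (with $s_j$ now global canonical sections of $\cal{O}(E_j)$) is a smooth metric on the $\bb{Q}$-line bundle $\pi^*L-\sum\upsilon_{E_j}E_j$. Its curvature is $\pi^*\iO{L,\hbar}$ minus the current of integration part. That curvature equals $\pi^*(\iO{L,h_0}+\idd\varphi_{m_\nu})$ restricted off $E$, extended smoothly, and it is $\geq\pi^*(\varepsilon\omega/2)\geq0$. So it is only semipositive, degenerate along $E$.

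The key step, and the main obstacle, is gaining strict positivity. We use that $\pi$ is a composition of blow-ups with smooth centers, so there exist $b_j\in\bb{Q}_{>0}$ (small) and smooth metrics making $-\sum b_jE_j$ relatively $\pi$-ample. Concretely, $\pi^*\omega+\theta\,\iO{-\sum b_jE_j}>0$ on a neighborhood of $\pi^{-1}(\overline{V})$ for $\theta>0$ small, by compactness of $\overline{V}$.

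Adding $\theta\sum b_jE_j$ with $\theta$ small, we set $\delta_j:=\upsilon_{E_j}-\lfloor\upsilon_{E_j}\rfloor+\theta b_j$. These are rational and lie in $[\upsilon_{E_j}-\lfloor\upsilon_{E_j}\rfloor,1)$ once $\theta$ is small enough that $\theta b_j<1-\{\upsilon_{E_j}\}$; this is possible since $\{\upsilon_{E_j}\}<1$ and $J$ is finite over a neighborhood of $\overline{V}$. The resulting curvature is $\pi^*(\varepsilon\omega/2)+\theta\,\iO{-\sum b_jE_j}$. For $\theta$ small relative to $c_0/2$, this is positive on $\tl V$, giving the claimed positivity of $\pi^*L\otimes\cal{O}(-\sum(\lfloor\upsilon_{E_j}\rfloor+\delta_j)E_j)$.

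The care needed is in choosing $\theta$ uniformly. Relative compactness of $V$ ensures finitely many components and uniform constants. If $\pi$ is not a sequence of smooth blow-ups, we would first replace it by one dominating it; this is harmless since snc and the formula in $(\alpha)$ persist.
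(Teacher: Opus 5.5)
Your proposal is correct and follows essentially the same route as the paper. The steps are the refined Demailly approximation with $\nu$ large, a principalization so that $\pi^*\varphi_{m_\nu}$ has simple normal crossing divisorial singularities with rational coefficients $\upsilon_{E_j}$, and removal of the divisorial part to get a smooth metric on $\pi^*L-\sum_j\upsilon_{E_j}E_j$ with curvature $\geq\pi^*(\varepsilon\omega/2)$; the final twist by a small rational multiple of an effective exceptional divisor with $\pi$-ample negative is the paper's ``Negativity Lemma'' bundle $P_\kappa$ scaled by $1/(m+\ell)$, i.e.\ your $\theta\sum_j b_jE_j$. Two minor points: the smooth metric should be $\pi^*\hbar\cdot\prod_j|s_j|^{2\upsilon_{E_j}}_{h_j}$ (positive exponent, to cancel the poles), and the paper additionally shows that the integral closures of the local ideals generated by the $\sigma_{\nu,k}$ glue to a global coherent ideal $\scr{J}_\nu$ with zero set $Z$, which is what justifies your ``we may assume $\pi$ principalizes the ideal'' with a single global $\pi$.
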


\begin{proof}
    For a smooth Hermitian metric $h_0$ on $L$, there exists a locally integrable function $\varphi\in L^1_{loc}(X,\bb{R})$ such that $h$ can be written as $h=h_0e^{-2\varphi}$ on $X$ and $\varphi$ is quasi-plurisubharmonic by $\iO{L,h}\geq\varepsilon\omega$. 
    For this $\varphi$, from Theorem \ref{Theorem: Demailly approximation with alg sing and ideal sheaves}, there exist an increasing sequence of positive integers $\{m_\nu\}_{\nu\in\bb{N}}$ and a sequence of quasi-plurisubharmonic functions $\{\varphi_{m_\nu}\}_{\nu\in\bb{N}}$ that satisfy conditions $(a)$-$(e)$ of Theorem \ref{Theorem: Demailly approximation with alg sing and ideal sheaves}. 
    Furthermore, there exists a sufficiently large integer $\nu_0$ such that, for every $\nu\geq\nu_0$, the singular Hermitian metric $h_\nu:=h_0e^{-2\varphi_{m_\nu}}$ can be chosen to satisfy $(a)$ and $(b)$. Throughout the remainder of the proof, we fix such a $\nu$ with $\nu\geq\nu_0$ and write $\varphi_\nu:=\varphi_{m_\nu}$ and $\hbar:=h_\nu=he^{-2\varphi_\nu}$. 
     
    As in the proof of \cite[Theorem 3.5]{Wat24}, we introduce the ideal $\scr{J}_\nu$ of germs of holomorphic function $f$ such that $|f|\leq C \exp{\frac{m_\nu\varphi_\nu}{1+2^{-\nu}}}$ for some constant $C$. 
    This is a globally defined ideal sheaf. Under the notation of the proof of Theorem \ref{Theorem: Demailly approximation with alg sing and ideal sheaves} (= \cite[Theorem 3.2]{Wat24}),
    from the strong Noetherian property of coherent ideal sheaves (see \cite[ChapterII,\,(3.22)]{Dem-book}), the sequence of ideal sheaves generated by the holomorphic functions $\{\sigma_{\nu,j,k}(z)\overline{\sigma_{\nu,j,k}(\overline{w})}\}_{k\leq K_j}$ on $B_j\times B_j$ is locally stationary as $K_j$ increases, hence independant of $K_j$ on $B'_j\times B'_j\Subset B_j\times B_j$ for $K_j$ large enough.
    By uniform convergence of $\sum^{\infty}_{k=1}\sigma_{\nu,j,k}(z)\overline{\sigma_{\nu,j,k}(\overline{w})}$ on compact sets, this sum of the series is a section of the coherent ideal sheaf generated by $\{\sigma_{\nu,j,k}(z)\overline{\sigma_{\nu,j,k}(\overline{w})}\}_{k\leq K_j}$ over $B'_j\times B'_j$.
    Hence, for some $C_j>0$ we obtain (see the proof of \cite[Theorem\,2.2.1]{DPS01}) 
    \begin{align*}
        \exp\frac{m_\nu(\varphi_{\nu}-\varPhi_{m_\nu})}{1+2^{-\nu}}=\sum^{+\infty}_{k=1}|\sigma_{\nu,j,k}(z)|^2\leq C_j\sum^{K_j}_{k=1}|\sigma_{\nu,j,k}(z)|^2 \quad\text{on}\quad B'_j,
    \end{align*}
    where $e^{-\varPhi_{m_\nu}}$ is bounded. 
    Then, $\scr{J}_\nu$ locally equal to the integral closure $\overline{\cal{J}_j}$ of the ideal sheaf $\cal{J}_j=(\sigma_{\nu,j,1},\ldots,\sigma_{\nu,j,K_j})$ by Brian\c{c}on-Skoda's theorem (see \cite[Theorem\,(11.17)]{Dem12}), 
    thus $\scr{J}_\nu$ is coherent. Furthermore, we have 
    \begin{align*}
        \mathrm{supp}\,\mathcal{O}_V/\!\scr{J}_\nu=V(\scr{J}_\nu)=V(\overline{\cal{J}_j})=V(\!\sqrt{\overline{\cal{J}_j}})=V(\sqrt{\cal{J}_j})=V(\cal{J}_j)
        =\!\!\bigcap_{1\leq k\leq K_j}\!\!\sigma_{\nu,j,k}^{-1}(0)=Z_\nu
    \end{align*}
    on each $B'_j$, and hence $\mathrm{supp}\,\mathcal{O}_V/\scr{J}_\nu=Z_\nu$ on a neighborhood of $V$, where $V\Subset\bigcup_j B'_j$ is a finite open cover of $V$.
    For simplicity, we set $Z:=Z_\nu$.

    By Hironaka's desingularization theorem \cite{Hir64}, there exists a log resolution $\pi:\widetilde{V}\longrightarrow V$ of $Z$ obtained by a finite sequence of blow-ups with smooth centers 
    such that $E=\sum_{\ell\in J}E_\ell:=\pi^{-1}(Z)_{red}$ is a simple normal crossing divisor.    
    For simplicity, we omit \( j \) locally and denote the local orthonormal basis by $\{\sigma_{\nu,k}\}_{k\in\bb{N}}$.
    Let $g$ be the local generator of the ideal generated by $\{\pi^*\sigma_{\nu,k}\}_{k\in\bb{N}}$, then
    there exists holomorphic functions $\tau_{\nu,k}$ such that $\pi^*\sigma_{\nu,k}=g\cdot \tau_{\nu,k}$, where $\tau_{\nu,k}$ have no common zeros (see \cite[Lemma\,2.3.19]{MM07}). 
    We consider the decomposition $g=\prod g_j^{a_j}$ of $g$ in irreducible factors, 
    then the local weight of $\pi^*\hbar$ has the form 
    \begin{align*}
        \pi^*\varphi_{\nu}=\frac{1+2^{-\nu}}{m_\nu}\sum_{j\in J} a_j\log|g_j|+\frac{1+2^{-\nu}}{2m_\nu}\log\sum_{k\in\bb{N}}|\tau_{\nu,k}|^2+\pi^*\varPhi_{m_\nu},
    \end{align*}
    where $\pi^*\hbar=\pi^*h_0\cdot e^{-2\pi^*\varphi_{\nu}}$ on $\widetilde{V}$. 
    After reordering the indices, we have that each $E_j$ is the global divisors given by the defining sections $g_j$.  
    For simplicity, we set $\upsilon_j:=(1+2^{-\nu})\frac{a_j}{m_\nu}\in\bb{Q}_{>0}$ for each $j\in J$. Then, the above expression for $\pi^*\varphi_\nu$ implies that 
    \begin{align*}
        \upsilon_{E_j}(\hbar):=\nu(\pi^*\hbar,E_j)=\nu(\pi^*\varphi_\nu,E_j):=\inf_{y\in E_j}\nu(\pi^*\varphi_\nu,y)=\nu(\pi^*\varphi_\nu,\eta_j)=\frac{1+2^{-\nu}}{m_\nu}a_j=\upsilon_j,
    \end{align*}
    where $\eta_j$ is a generic point of $E_j$ (see \cite[Chapter III, Lemma 8.15]{Dem-book}). 
    Furthermore, it is already known that (see \cite[Remark\,5.9]{Dem12})
    \begin{align*}
        \scr{I}(\pi^*\hbar)=\cal{O}_{\widetilde{V}}\Bigl(-\sum_{j\in J}\lfloor\frac{(1+2^{-\nu})}{m_\nu}a_j\rfloor E_j\Bigr)=\cal{O}_{\widetilde{V}}\Bigl(-\sum_{j\in J}\lfloor\upsilon_j\rfloor E_j\Bigr),
    \end{align*}
    and $\pi^*L\otimes\scr{I}(\pi^*\hbar)=\pi^*L\otimes\cal{O}_{\widetilde{V}}(-\sum_{j\in J}\lfloor\upsilon_j\rfloor E_j)$ is a holomorphic line bundle.

    Define a natural singular Hermitian metric on the $\bb{Q}$-line bundle $\cal{O}_{\widetilde{V}}\big(\sum_{j\in J}\upsilon_jE_j\big)$ by 
    \begin{align*}
        h_\upsilon:=\frac{1}{\prod_{j\in J}|g_j|^{2\upsilon_j}},        
    \end{align*}
    which has the same singularities as $\pi^*\hbar$. By canceling the singularities, the Hermitian metric $\pi^*\hbar\otimes h^*_\upsilon$ on the $\bb{Q}$-line bundle 
    \begin{align*}
        \cal{L}_\upsilon:=\pi^*L\otimes\cal{O}_{\widetilde{V}}\Bigl(-\sum_{j\in J}\upsilon_jE_j\Bigr)=\pi^*L\otimes\scr{I}(\pi^*\hbar)\otimes\cal{O}_{\widetilde{V}}\Bigl(-\sum_{j\in J}(\upsilon_j-\lfloor\upsilon_j\rfloor)E_j\Bigr)
    \end{align*}
    is smooth. Since $\iO{\pi^*L,\pi^*\hbar}\geq\pi^*(\varepsilon\omega)/2$ on $\tv$ in the sense of currents, the smooth curvature form of $\pi^*\hbar\otimes h^*_\upsilon$ on $\cal{L}_\upsilon$ also satisfies $\iO{\cal{L}_\upsilon,\pi^*\!\hbar\,\otimes h^*_\upsilon}\geq\pi^*(\varepsilon\omega)/2$ on $\tv$.
    Indeed, $\iO{\cal{O}_{\widetilde{V}}(\sum \upsilon_jE_j),h_\upsilon}=\sum_{j\in J}\upsilon_j[E_j]\geq0$ in the sense of currents, and it vanishes on $\tv\setminus E$, where $[E_j]$ denotes the current of integration along $E_j$.
    Similarly, $\iO{\pi^*L,\pi^*\hbar}$ is smooth on $\tv\setminus E$. We first have $\iO{\cal{L}_\upsilon,\pi^*\hbar\otimes h^*_\upsilon}\geq\pi^*(\varepsilon\omega)/2$ on $\tv\setminus E$. 
    Since the curvature form $\iO{\cal{L}_\upsilon,\pi^*\hbar\otimes h^*_\upsilon}$ is smooth, the inequality extends to all of $\tv$. 

    The $\pi$-exceptional divisor $\exc(\pi)$ is also a simple normal crossing divisor, and there exists a subset $J'\subseteq J$ such that $\exc(\pi)=\sum_{j\in J'}E_j\subseteq E$.
    Here, $\pi^*(\varepsilon\omega)$ is positive on $\tv\setminus\exc(\pi)$, while its positivity degenerates along $\exc(\pi)$, and hence it is only semi-positive on $\tv$. 
    By applying the Negativity Lemma (see \cite[Remark 1.6.2 (2)]{Kaw24}, \cite[Lemma 2.2]{Wat25b}), there exist integers $\kappa_j\in\bb{N}\cup\{0\}$ such that the holomorphic line bundle $P_\kappa:=\cal{O}_{\tv}\big(-\sum_{j\in J}\kappa_jE_j\big)$ admits a smooth Hermitian metric $\cal{H}_\kappa$ whose curvature compensates for the degeneration of the positivity of $\pi^*\omega$ along $\exc(\pi)$. 
    Here, the support of $\sum_{j\in J}\kappa_jE_j$ coincides with $\exc(\pi)$, i.e., $\kappa_j\in\bb{N}$ for $j\in J'$ and $\kappa_j=0$ for $j\in J\setminus J'$.
    By the compactness of $V$, there exists a sufficiently large $m\in\bb{N}$ such that the smooth curvature $\iO{P_{\kappa},\cal{H}_\kappa}+m\pi^*(\varepsilon\omega)/2$ is positive on $\tv$.
    Let 
    \begin{align*}
        \ell:=\max_{j\in J}\Bigl\{\frac{\kappa_j}{1-\upsilon_j+\lfloor\upsilon_j\rfloor}-m,0\Bigr\}+1\in\bb{Q}_{\geq1}. 
    \end{align*}
    Then, for each $j\in J$, we obtain 
    \begin{align*}
        0\leq\vartheta_j:=\frac{\kappa_j}{m+\ell}< 1-\upsilon_j+\lfloor\upsilon_j\rfloor.
    \end{align*}
    Furthermore, it is clear that $\iO{P_{\kappa},\cal{H}_\kappa}+(m+\ell)\pi^*(\varepsilon\omega)/2>0$ on $\tv$.
    After dividing by $m+\ell$, the smooth Hermitian metric $\cal{H}_\kappa^{1/(m+\ell)}$ on the $\bb{Q}$-line bundle $P_\kappa^{1/(m+\ell)}=\cal{O}_{\tv}(-\sum_{j\in J}\vartheta_j E_j)$ satisfies $\iO{P_{\kappa}^{1/(m+\ell)},\cal{H}_\kappa^{1/(m+\ell)}}+\pi^*(\varepsilon\omega)/2>0$ on $\tv$.
    
    Setting $\delta_j:=\vartheta_j+\upsilon_j-\lfloor\upsilon_j\rfloor$, we obtain $\delta_j\in[\upsilon_j-\lfloor\upsilon_j\rfloor,1)\cap\bb{Q}\subset[0,1)$ for every $j\in J$. 
    Consequently, the Hermitian metric $\pi^*\hbar\otimes h^*_\upsilon\otimes\cal{H}_\kappa^{1/(m+\ell)}$ on the $\bb{Q}$-line bundle 
    \begin{align*}
        \cal{L}_\upsilon\otimes P_\kappa^{\frac{1}{m+\ell}}&=\pi^*L\otimes\scr{I}(\pi^*\hbar)\otimes\cal{O}_{\widetilde{V}}\Bigl(-\sum_{j\in J}(\upsilon_j-\lfloor\upsilon_j\rfloor)E_j\Bigr)\otimes\cal{O}_{\widetilde{V}}\Bigl(-\sum_{j\in J}\vartheta_jE_j\Bigr)\\
        &=\pi^*L\otimes\scr{I}(\pi^*\hbar)\otimes\cal{O}_{\widetilde{V}}\Bigl(-\sum_{j\in J}\delta_jE_j\Bigr)
    \end{align*}
    is smooth and has positive curvature on $\tv$. Indeed, we obtain the curvature inequality
    \begin{align*}
        \iO{\cal{L}_\upsilon,\pi^*\!\hbar\,\otimes h^*_\upsilon}+\frac{1}{m+\ell}\iO{P_\kappa,\cal{H}_\kappa}\geq\frac{\pi^*(\varepsilon\omega)}{2}+\frac{1}{m+\ell}\iO{P_\kappa,\cal{H}_\kappa}>0
    \end{align*}
    on $\tv$. Thus, the proof is complete.
\end{proof}

Theorem \ref{Theorem: positivity of Q-line bundle in Introduction} follows immediately from Theorem \ref{Theorem: Blow-ups and Q-line bundles} and Demailly's characterization (see \cite{Dem90}, \cite[Theorem 2.3.30]{MM07}).

\begin{theorem}\label{Theorem: positivity of Q-line bundles for nef and big}
    Let $X$ be a compact complex manifold equipped with a Hermitian metric $\omega$ and $L\longrightarrow X$ be a holomorphic line bundle. 
    If $L$ is nef and big, then there exists a singular Hermitian metric $h_{nb}$ with algebraic singularities, whose curvature is a strictly positive current, such that $\scr{I}(h_{nb})=\cal{O}_X$.
    Let $Z$ be the analytic subset of $X$ given by the singular locus of $h_{nb}$ and $\pi:\tx\longrightarrow X$ be a log resolution of $Z$ such that $E=\sum_{j\in J}E_j$ $=\pi^{-1}(Z)_{red}$ is a simple normal crossing divisor. 
    Then, the following conditions hold:
    \begin{itemize}
        \item [$(\alpha)$] for every $j\in J$, the divisorial Lelong number $\upsilon_{E_j}(h_{nb})$ satisfies $\upsilon_{E_j}(h_{nb})<1$. Therefore, we obtain $\scr{I}(\pi^*h_{nb})=\cal{O}_{\tx}$.
        \item [$(\beta)$] there exist positive rational numbers $\delta_j\in[\upsilon_{E_j}(h_{nb}),1)\cap\bb{Q}\subset(0,1)$, $j\in J$, such that the $\bb{Q}$-line bundle $\pi^*L\otimes\cal{O}_{\tx}\big(\!-\sum_{j\in J}\delta_jE_j\big)$ is positive on $\tx$. 
    \end{itemize}
\end{theorem}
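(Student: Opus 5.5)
The plan is to construct $h_{nb}$ with all Lelong numbers below $1$, and then apply Theorem \ref{Theorem: Blow-ups and Q-line bundles} with $V=X$ (which is allowed because $X$ is compact).

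\textbf{Step 1: build a good metric.} Since $L$ is big, Demailly's characterization (\cite{Dem90}, \cite[Theorem 2.3.30]{MM07}) gives a singular metric $h_1=h_0e^{-2\varphi_1}$ on $L$ with $\iO{L,h_1}\geq\varepsilon_1\omega$. Since $L$ is nef, for every $\eta>0$ there is a smooth metric $h_\eta=h_0e^{-2\psi_\eta}$ with $\iO{L,h_\eta}\geq-\eta\omega$. For a small $t\in(0,1)\cap\bb{Q}$ we form the convex combination
\begin{align*}
\varphi_t:=t\varphi_1+(1-t)\psi_\eta .
\end{align*}
This gives $\iO{L,h_0e^{-2\varphi_t}}\geq(t\varepsilon_1-(1-t)\eta)\omega$. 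Choosing $\eta<t\varepsilon_1/2$ makes the right-hand side at least $t\varepsilon_1\omega/2$, so the curvature is strictly positive. Moreover $\nu(\varphi_t,x)=t\,\nu(\varphi_1,x)$. By compactness, $\nu(\varphi_1,\cdot)$ is bounded on $X$, so for $t$ small enough we get $\nu(\varphi_t,x)\leq 1/2$ everywhere. Skoda's lemma then gives $\scr{I}(\varphi_t)=\cal{O}_X$.

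\textbf{Step 2: pass to algebraic singularities.} Apply Theorem \ref{Theorem: Blow-ups and Q-line bundles} to $h:=h_0e^{-2\varphi_t}$ with $V=X$. This yields $\hbar=:h_{nb}$ with algebraic singularities, $\scr{I}(h_{nb})=\scr{I}(h)=\cal{O}_X$, and strictly positive curvature. By Theorem \ref{Theorem: Demailly approximation with alg sing and ideal sheaves}$(e)$, $\nu(h_{nb},x)\leq(1+2^{-\nu})\nu(\varphi_t,x)\leq(1+2^{-\nu})/2<1$.

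\textbf{Step 3: prove $(\alpha)$.} We need $\upsilon_{E_j}(h_{nb})<1$ for every $j$. Pointwise Lelong numbers below $1$ do not suffice, because divisorial Lelong numbers on a blow-up can exceed pointwise ones (up to a factor of about $n$, by the Zariski-type comparison). I would use two facts instead. First, Theorem \ref{Theorem: Blow-ups and Q-line bundles}$(\alpha)$ gives $\scr{I}(\pi^*h_{nb})=\cal{O}(-\sum\lfloor\upsilon_j\rfloor E_j)$. Second, $\pi_*(K_{\tx/X}\otimes\scr{I}(\pi^*h_{nb}))=\scr{I}(h_{nb})=\cal{O}_X$. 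These two facts only control $\upsilon_j$ minus the discrepancy, not $\upsilon_j$ itself. This is the main obstacle.

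The cleanest fix is to shrink $t$ further in Step 1 and invoke the bound
\begin{align*}
\upsilon_E(\varphi)\leq C_\pi\sup_x\nu(\varphi,x),
\end{align*}
valid for the finitely many divisors of the fixed resolution. The difficulty is that $\pi$ depends on $Z$, which depends on $t$. To break this circularity, I would use a scaling trick. Having produced one $h_{nb}$ with resolution $\pi$ and rational $\upsilon_j$, I would replace $h_{nb}$ by $h_{nb}^{s}h_\eta^{1-s}$ with $s$ small and $\eta$ correspondingly small, where $h_\eta$ is the smooth nef-approximating metric. This keeps the same singular locus $Z$ (so the same $\pi$ and the same $E_j$), keeps the singularities algebraic for rational $s$, scales every $\upsilon_j$ by $s$ so that $\upsilon_j<1$, and keeps the curvature strictly positive. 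Then $\lfloor\upsilon_j\rfloor=0$, so $\scr{I}(\pi^*h_{nb})=\cal{O}_{\tx}$, which is $(\alpha)$.

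\textbf{Step 4: prove $(\beta)$.} Once $(\alpha)$ holds, Theorem \ref{Theorem: Blow-ups and Q-line bundles}$(\beta)$ applied with $V=X$ gives $\delta_j\in[\upsilon_j,1)\cap\bb{Q}$ such that $\pi^*L\otimes\cal{O}(-\sum\delta_jE_j)$ is positive. Each $\upsilon_j>0$ because $E_j$ lies over $Z$, so $\delta_j\in(0,1)$. To do this rigorously I would rerun the last part of that proof directly for the rescaled metric, since the rescaled metric is not literally the output of the approximation theorem.
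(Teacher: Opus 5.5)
Your proposal is correct. In its final form (Steps 2--4) it is the paper's argument: approximate to a metric $\hbar$ with algebraic singularities, fix the log resolution $\pi$ of its singular locus, then use nefness to pass to $\hbar^{s}h_\eta^{1-s}$ with small rational $s$ (the paper takes $s=\varepsilon/c_0$ with $\varepsilon<c_0/\max\{\upsilon_{max},\nu_{max}\}$). This keeps $Z$, $\pi$ and the $E_j$ while pushing all pointwise and divisorial Lelong numbers below $1$, and $(\beta)$ then follows from the Negativity Lemma argument of Theorem \ref{Theorem: Blow-ups and Q-line bundles}.

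Your preliminary convex combination in Step 1 is harmless but redundant. The rescaling after resolution already gives $\nu(h_{nb},x)<1$, and hence $\scr{I}(h_{nb})=\cal{O}_X$.
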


\begin{proof}
    From the bigness of $L$, there exists a singular Hermitian metric $h$ on $L$ with a strictly positive curvature current (see \cite{Dem90}, \cite[Theorem 2.3.30]{MM07}). 
    By the compactness of $X$, there exists a sufficiently small positive rational number $c_0\in(0,1)\cap\bb{Q}$ such that $\iO{L,h}\geq 2c_0\omega$ holds on $X$ in the sense of currents. 
    For the singular Hermitian metric $h$, as in the proof of Theorem \ref{Theorem: Blow-ups and Q-line bundles}, we take a singular Hermitian metric $\hbar$, its singular locus $Z$, and a log resolution $\pi:\tx\longrightarrow X$ of $Z$. 
    Then, we obtain $\iO{L,h}\geq c_0\omega$ on $X$ and $\iO{\pi^*L,\pi^*\hbar}\geq c_0\pi^*\omega$ on $\tx$ in the sense of currents. Set $\upsilon_j:=\upsilon_{E_j}(\hbar)$. 

    Since $L$ is nef, for every sufficiently small positive number $\varepsilon>0$, there exists a smooth Hermitian metric $h_\varepsilon$ on $L$ such that $\iO{L,h_\varepsilon}\geq-\varepsilon\omega$ on $X$. 
    In particular, we take $\varepsilon$ to be a rational number in what follows. Let $\upsilon_{max}:=\max_{j\in J}\upsilon_j$ and $\nu_{max}:=\max_{x\in X}\nu(\hbar,x)$. 
    Take and fix a sufficiently small positive rational number $\varepsilon$ satisfying $0<\varepsilon<c_0/\max\{\upsilon_{max},\nu_{max}\}$. Using this $\varepsilon\in\bb{Q}_{>0}$, define a new singular Hermitian metric $h_{nb}$ on $L$ by the convex combination $h_{nb}:=\hbar^{\varepsilon/c_0}h_\varepsilon^{1-\varepsilon/c_0}$.  
    The metric $h_{nb}$ clearly has algebraic singularities with singular locus $Z$, which is the same as that of $\hbar$.
    The following inequality shows that the curvature current of $h_{nb}$ is strictly positive.
    \begin{align*}
        \iO{L,h_{nb}}=\frac{\varepsilon}{c_0}\iO{L,\hbar}+\Bigl(1-\frac{\varepsilon}{c_0}\Bigr)\iO{L,h_\varepsilon}\geq\varepsilon\omega-\Bigl(1-\frac{\varepsilon}{c_0}\Bigr)\varepsilon\omega=\frac{\varepsilon^2}{c_0}\omega>0.
    \end{align*}
    By a result of Skoda (see \cite[Lemma 5.6]{Dem12}), since $\nu(h_{nb},x)=\frac{\varepsilon}{c_0}\nu(\hbar,x)\leq\frac{\varepsilon\nu_{max}}{c_0}<1$ for every $x\in X$, we obtain $\scr{I}(h_{nb})=\cal{O}_X$, and we also obtain the desired estimate
    \begin{align*}
        \upsilon_{E_j}(h_{nb}):=\nu(\pi^*h_{nb},E_j)=\frac{\varepsilon}{c_0}\nu(\pi^*\hbar,E_j)=\frac{\varepsilon}{c_0}\upsilon_{E_j}(\hbar)=\frac{\varepsilon}{c_0}\upsilon_j<1
    \end{align*}
    for the divisorial Lelong numbers for every $j\in J$. 

    Finally, by applying the Negativity Lemma and arguing as in the proof of Theorem \ref{Theorem: Blow-ups and Q-line bundles}, we take smooth Hermitian metrics $\cal{H}_\kappa$ on $P_\kappa$ to compensate for the positivity of $\pi^*\omega$, 
    and by choosing $m$ and $\ell$ (equivalently, $\delta_j\in\bb{Q}_{>0}$) appropriately, we can construct a smooth Hermitian metric satisfying the final condition $(\beta)$.
\end{proof}

\begin{conjecture}
    For a singular Hermitian metric $h$ on $L$, can the metric $\hbar$ in Theorem \ref{Theorem: Blow-ups and Q-line bundles} be chosen appropriately, using condition $(e)$ in Theorem \ref{Theorem: Demailly approximation with alg sing and ideal sheaves} on the Lelong numbers, to satisfy $\scr{I}(\pi^*h)=\scr{I}(\pi^*\hbar)$?
\end{conjecture}

\section{Steenbrink-type vanishing for singular positive line bundles}

In this section, we establish various Steenbrink-type vanishing theorems for singular positive line bundles. 
First, as an extension of Theorem \ref{Theorem: Steenbrink vanishing for big in Introduction}, we present the following result for relatively compact weakly pseudoconvex complex spaces.
Here, a function $\varPsi:X\longrightarrow[-\infty,+\infty)$ on a complex space $X$ is \textit{exhaustion} if all sublevel sets $X_c:=\{x\in X\mid\varPsi(x)<c\}$, $\forall\,c\in\bb{R}$, are relatively compact. 
A complex space is said to be \textit{weakly} \textit{pseudoconvex} if there exists a smooth exhaustion plurisubharmonic function.

\begin{theorem}\label{Theorem: Steenbrink vanishing for singular positive in not Introduction}
    Let $Y$ be a complex space of pure dimension $n$ and $\mu:\widehat{Y}\longrightarrow Y$ be a resolusion of singularities. 
    Let $X$ be a relatively compact weakly pseudoconvex open subset of $Y$, let $V$ be an open neighborhood of $\overline{X}$, and let $L\longrightarrow V$ be a holomorphic line bundle with a singular Hermitian metric $h$. 
    
    If $h$ is singular positive on $V$, then there exists a singular Hermitian metric $\hbar$ on $L|_X$ such that $\mu^*\hbar$ is singular positive on $\hx:=\mu^{-1}(X)$, has algebraic singularities on $\hx\setminus\exc(\mu)$, and satisfies $\scr{I}(\mu^*h)=\scr{I}(\mu^*\hbar)$ on $\hx$. 
    For a log resolusion $\pi:\tx\longrightarrow\hx$ of the singular locus $Z$ of $\mu^*\hbar$, the following Steenbrink-type vanishing holds: 
    \begin{align*}
        H^q(\tx,\Omega^p_{\tx}(\log E)\otimes\cal{O}_{\tx}(-E)\otimes\tl{\pi}^*L\otimes\scr{I}(\tl{\pi}^*\hbar))=0,
    \end{align*}
    for any $p+q>n$, where $\tx:=\pi^{-1}(\hx)$ and $\tl{\pi}:=\mu\circ\pi:\tx\longrightarrow X$, and $E:=\pi^{-1}(Z)_{red}$ is a simple normal crossing divisor. 
\end{theorem}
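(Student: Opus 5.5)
The proof has three stages: pass to the resolution, apply Theorem \ref{Theorem: Blow-ups and Q-line bundles} there, and then run an $L^2$ Nakano-type argument with the logarithmic resolution of Theorem \ref{Theorem: Logarithmic L2-Dolbeault resolusion}.

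\emph{Step 1: pulling back.} Since $h$ is singular positive on $V$, Proposition \ref{Proposition: curvature condition of sHm} gives $\iO{L,h}\geq\varepsilon\omega$ on $V$ in the sense of currents. Pulling back, $\mu^*h$ has quasi-plurisubharmonic weights on $\hv:=\mu^{-1}(V)$. Its curvature satisfies $\iO{\mu^*L,\mu^*h}\geq\mu^*(\varepsilon\omega)$, which is positive off $\exc(\mu)$ and degenerates along it. I would compensate for this degeneration by the Negativity Lemma, exactly as in the proof of Theorem \ref{Theorem: Blow-ups and Q-line bundles}. This is needed to say that $\mu^*\hbar$ is singular positive on $\hx$ in the strict sense, and not merely that its curvature dominates $\mu^*(\varepsilon\omega)/2$.

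\emph{Step 2: the approximation and the positive $\bb{Q}$-bundle.} I apply Theorem \ref{Theorem: Demailly approximation with alg sing and ideal sheaves} to $\mu^*h$ on a relatively compact neighbourhood of $\overline{\hx}$. The approximant $\wh{h}$ then has algebraic singularities, satisfies $\scr{I}(\wh{h})=\scr{I}(\mu^*h)$, and has curvature at least $\mu^*(\varepsilon\omega)/2$. Over $\hx\setminus\exc(\mu)\cong X\setminus\mu(\exc(\mu))$ it descends to a metric $\hbar$ on $L|_X$. I would then check that $\mu^*\hbar$ coincides with $\wh{h}$ almost everywhere and extends as required, using that weights are determined almost everywhere. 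Applying Theorem \ref{Theorem: Blow-ups and Q-line bundles} to $\mu^*\hbar$ on $\hx$ gives $(\alpha)$ and $(\beta)$: after a log resolution $\pi$ of $Z$ (a finite sequence of blow-ups, so $\tx$ is relatively compact in a weakly pseudoconvex manifold), there are rational $\delta_j\in[0,1)$ such that
\[
\cal{L}:=\tl{\pi}^*L\otimes\scr{I}(\tl{\pi}^*\hbar)\otimes\cal{O}_{\tx}\Big(-\sum_j\delta_jE_j\Big)
\]
carries a smooth metric $H$ with positive curvature near $\overline{\tx}$. Since $\scr{I}(\tl{\pi}^*\hbar)$ is the invertible sheaf $\cal{O}(-\sum\lfloor\upsilon_j\rfloor E_j)$, we can rewrite
\[
\Omega^p_{\tx}(\log E)\otimes\cal{O}(-E)\otimes\tl{\pi}^*L\otimes\scr{I}(\tl{\pi}^*\hbar)=\Omega^p_{\tx}(\log E)\otimes\cal{O}\Big(-\sum_j(1-\delta_j)E_j\Big)\otimes\cal{L}
\]
as $\bb{Q}$-line bundles, with $1-\delta_j\in(0,1]$.

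\emph{Step 3: vanishing, and the main obstacle.} The coefficients $\tau_j:=1-\delta_j$ lie in $(0,1]$, which is exactly the range allowed in Theorem \ref{Theorem: Logarithmic L2-Dolbeault resolusion}. I therefore realise the cohomology by the logarithmic $L^2$-Dolbeault complex, taking $F=\cal{L}$, $h_F=H$, $\omega_P$ of Poincaré type along $E$, and the weight $\prod|\sigma_j|^{2\tau_j}(\log|\sigma_j|^2)^{2\alpha}$. It then suffices to solve $\dbar u=f$ in $L^2$ for $\dbar$-closed $(p,q)$-forms with $p+q>n$. I would do this with the Bochner--Kodaira--Nakano inequality on the complete Kähler manifold $\tx\setminus E$, adding a convex increasing function of the weakly pseudoconvex exhaustion $\varPsi\circ\tl{\pi}$ to the weight, as in the Hörmander--Demailly treatment of weakly pseudoconvex manifolds. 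This follows the proof of \cite[Theorem 1.1]{HLWY23}.

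The main obstacle is the curvature estimate of this twisted metric in bidegree $p+q>n$ with respect to the Poincaré metric. The terms $\idd\log(\log|\sigma_j|^2)^{2\alpha}$ must be absorbed by the strict positivity of $H$ on the compact set $\overline{\tx}$, in the same way as in \cite{HLWY23}. Compactness of the closure is exactly what gives a uniform lower bound and a choice of $\alpha$. Finally, one must check that the non-compactness of $\tx$ is handled by the exhaustion weight without destroying the isomorphism of Theorem \ref{Theorem: Logarithmic L2-Dolbeault resolusion}.
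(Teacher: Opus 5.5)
Your Steps 1--2 contain a genuine gap.

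\textbf{Steps 1--2: strict positivity and the multiplier ideal.} You apply Theorem~\ref{Theorem: Demailly approximation with alg sing and ideal sheaves} directly to $\mu^*h$, whose curvature is bounded below only by the degenerate form $\gamma=\mu^*(\varepsilon\omega)$. That theorem yields $T_{m_\nu}\geq\gamma-\varepsilon_\nu\omega_{\hv}$, where $\omega_{\hv}$ is a genuine Hermitian metric on $\hv$. In the directions where $\mu^*\omega$ degenerates (e.g.\ tangent to positive-dimensional fibres of $\mu$), nothing absorbs the loss $\varepsilon_\nu\omega_{\hv}$. So the approximant need not satisfy $\iO{\mu^*L,\wh{h}}\geq\mu^*(\varepsilon\omega)/2$, and need not even be semipositive there. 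Consequently Theorem~\ref{Theorem: Blow-ups and Q-line bundles} cannot be applied to $\mu^*\hbar$ as you do, since it requires $\iO{\mu^*L,\mu^*\hbar}\geq\varepsilon\,\omega_{\hv}$ on a neighbourhood of $\overline{\hx}$.

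The compensation must be done at the level of metrics and \emph{before} approximating. The paper sets $\cal{H}:=\mu^*h\,e^{-\varepsilon_V\psi}$, where $\psi$ is quasi-plurisubharmonic on $\hv$, smooth off $\exc(\mu)$ and singular along it, and $\idd\psi$ supplies the missing positivity. These extra poles could shrink the multiplier ideal. The ingredient your proposal lacks is the strong openness property \cite{GZ15}. It lets you choose $\varepsilon_V>0$ so small that $\cal{H}$ is singular positive on $\hv$ while still $\scr{I}(\cal{H})=\scr{I}(\mu^*h)$. Approximating $\cal{H}$ rather than $\mu^*h$ then produces $\wh{\cal{H}}$ with algebraic singularities, strict positivity, and $\scr{I}(\wh{\cal{H}})=\scr{I}(\mu^*h)$, as the statement demands.

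Compensating instead by a $\bb{Q}$-divisor twist, as inside Theorem~\ref{Theorem: Blow-ups and Q-line bundles}, is not available here. When $Z$ comes from approximating $\mu^*h$, components of $\exc(\mu)$ need not lie in $E=\pi^{-1}(Z)_{red}$. Then $\sum_j\delta_jE_j$ has no room for the compensating coefficients.

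\textbf{Step 3: the curvature estimate.} Here the estimate you call the main obstacle is left open, and a fixed Poincar\'e-type metric $\omega_P$ would not close it. First, Theorem~\ref{Theorem: Logarithmic L2-Dolbeault resolusion} needs an honest bundle. Take $F:=\tl{\pi}^*L\otimes\scr{I}(\tl{\pi}^*\hbar)\otimes\cal{O}_{\tx}(-E)$ rather than $F=\cal{L}$, with a smooth metric $h_F$ such that $h_F\prod_jh_{E_j}^{1-\delta_j}=H$. Set $h':=h_F\,e^{-\chi\circ\tl{\varPsi}}\prod_j|\sigma_j|^{2(1-\delta_j)}_{h_{E_j}}\bigl(\log(\varepsilon|\sigma_j|^2_{h_{E_j}})\bigr)^{2\alpha}$.

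For $(p,q)$-forms with $p<n$, positivity of $\iO{F,h'}$ bounds $[\iO{F,h'},\Lambda_{\omega_P}]$ from below only by $\lambda_1+\cdots+\lambda_q-\lambda_{p+1}-\cdots-\lambda_n$. Here the $\lambda_i$ are the eigenvalues relative to $\omega_P$, and this quantity can be negative.

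The paper proceeds as follows. It fixes $\alpha$ large for the resolution. It then fixes $\varepsilon$ small so that the terms $2\alpha\, i\Theta_{E_j}/\log(\varepsilon|\sigma_j|^2_{h_{E_j}})$ lie within $\pm c_0\omega$. Finally it takes the curvature form itself as the K\"ahler metric, $\omega_{\tx\setminus E}:=\iO{F,h'}\geq c_0\omega$. This metric is of Poincar\'e type along $E$ and is complete thanks to the $\idd\chi\circ\tl{\varPsi}$ term. It gives exactly $[\iO{F,h'},\Lambda_{\omega_{\tx\setminus E}}]=(p+q-n)\,\mathrm{id}$, so H\"ormander's estimate applies at once.

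Since $\chi\circ\tl{\varPsi}$ is smooth on $\tx$, it changes neither the local $L^2$ sheaves nor the isomorphism of Theorem~\ref{Theorem: Logarithmic L2-Dolbeault resolusion}. This settles your last concern.
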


\begin{proof}
    After shrinking $V$, we may assume that $V$ is a relatively compact open neighborhood of $\overline{X}$, and set $\wh{V}:=\mu^{-1}(V)$.
    Arguing as in the proofs of \cite[Theorem 1.5]{Wat26b} and \cite[Theorem 4.17]{Wat26c}, and using \cite[Lemma 3.2]{Wat25b} together with the strong openness property (see \cite{GZ15}), there exist a quasi-plurisubharmonic function $\psi:\wh{V}\longrightarrow[-\infty,+\infty)$ which is smooth on $\wh{V}\setminus\exc(\mu)$ 
    and a sufficiently small constant $\varepsilon_V>0$ such that the singular Hermitian metric $\cal{H}:=\mu^*he^{-\varepsilon_V\psi}$ on $\mu^*L|_{\wh{V}}$ is singular positive on $\wh{V}$ and satisfies $\scr{I}(\mu^*h)=\scr{I}(\cal{H})$ on $\wh{V}$. 
    
    By the refined Demailly approximation Theorem \ref{Theorem: Demailly approximation with alg sing and ideal sheaves}, there exists a singular Hermitian metric $\wh{\cal{H}}$ on $\mu^*L|_{\wh{V}}$ such that $\wh{\cal{H}}$ is singular positive on $\wh{V}$, has algebraic singularities, and satisfies $\scr{I}(\wh{\cal{H}})=\scr{I}(\cal{H})$ on $\wh{V}$. 
    Let $Z$ be the analytic subset of $\hv$ given by the singular locus of $\wh{\cal{H}}$ and $\pi:\tv\longrightarrow\hv$ be a log resolusion of $Z$ such that $E=\sum_{j\in J}E_j:=\pi^{-1}(Z)_{red}$ is a simple normal crossing divisor. 
    Then, it follows from Theorem \ref{Theorem: Blow-ups and Q-line bundles} that there exist nonnegative rational numbers $\delta_j\in [\upsilon_{E_j}(\wh{\cal{H}})-\lfloor\upsilon_{E_j}(\wh{\cal{H}})\rfloor,1)\cap\bb{Q}\subset [0,1)$ such that 
    \begin{align*}
        \tl{\pi}^*L\otimes\scr{I}(\pi^*\wh{\cal{H}})\otimes\cal{O}_{\tv}\Big(-\sum_{j\in J}\delta_jE_j\Big)=\tl{\pi}^*L\otimes\cal{O}_{\tv}\Big(-\sum_{j\in J}(\lfloor\upsilon_{E_j}(\wh{\cal{H}})\rfloor+\delta_j)E_j\Big)
    \end{align*}
    is positive on $\tv$, where $\tv:=\pi^{-1}(\hv)$ and $\tl{\pi}:=\mu\circ\pi:\tv\longrightarrow V$. 

    Using the biholomorphic restriction $\mu|_{\wh{Y}\setminus\exc(\mu)}:\wh{Y}\setminus\exc(\mu)\overset{\simeq}{\longrightarrow}Y\setminus Y_{sing}=Y_{reg}$, 
    we define a new singular Hermitian metric $\hbar$ on $L|_V$ by
    \[
    \hbar :=
    \begin{cases}
    \bigl((\mu|_{\wh{Y}\setminus\exc(\mu)})^{-1}\bigr)^*\wh{\cal{H}}
    & \text{on }\, V\setminus\exc(\mu),\\
    0 & \text{on }\, \exc(\mu)\setminus\mu(Z),\\
    +\infty & \text{on }\, \exc(\mu)\cap\mu(Z).
    \end{cases}
    \]
    Since $\mu^*\hbar$ coincides with $\wh{\cal{H}}$ on $\wh{V}\setminus\exc(\mu)$, it has algebraic singularities on $\wh{V}\setminus\exc(\mu)$, is singular positive on $\hv$, and satisfies $\scr{I}(\mu^*\hbar)=\scr{I}(\wh{\cal{H}})=\scr{I}(\cal{H})=\scr{I}(\mu^*h)$ on $\hv$.
    In particular, the construction of $\hbar$ implies that its singular locus $\{x\in\hv\mid\hbar(x)=+\infty\}$ coincides with $Z$. Furthermore, the following $\bb{Q}$-line bundle 
    \begin{align*}
        \cal{L}_{\delta}:=\tl{\pi}^*L\otimes\scr{I}(\tl{\pi}^*\hbar)\otimes\cal{O}_{\tv}\Big(-\sum_{j\in J}\delta_jE_j\Big)
    \end{align*}
    is positive on $\tv$, where $\scr{I}(\tl{\pi}^*\hbar)=\scr{I}(\pi^*\wh{\cal{H}})$ since $\tl{\pi}^*\hbar$ coincides with $\pi^*\wh{\cal{H}}$ almost everywhere.
    In other words, there exist a smooth Hermitian metric $h_{\cal{L}}$ on the line bundle 
    \begin{align*}
        \cal{L}:=\tl{\pi}^*L\otimes\scr{I}(\tl{\pi}^*\hbar)\otimes\cal{O}_{\tv}(-E)=\tl{\pi}^*L\otimes\cal{O}_{\tv}\Big(-\sum_{j\in J}(\lfloor\upsilon_{E_j}(\wh{\cal{H}})\rfloor+1)E_j\Big)
    \end{align*}
    and a smooth Hermitian metric $h_{E_j}$ on $\cal{O}_{\tv}(E_j)$ such that the induced smooth Hermitian metric $h_{\cal{L}_\delta}:=h_{\cal{L}}\prod_{j\in J}h^{1-\delta_j}_{E_j}$ on $\cal{L}_\delta$ has positive curvature, that is, 
    \begin{align*}
        \iO{\cal{L}_\delta,h_{\cal{L}_\delta}}=\iO{\cal{L},h_{\cal{L}}}+i\sum_{j\in J}(1-\delta_j)\Theta_{\cal{O}_{\tv}(E_j),h_{E_j}}>0
    \end{align*}
    on $\tv$, where $1-\delta_j\in (0,1]$.
    From this $\bb{Q}$-line bundle $\cal{L}_\delta$, we can construct a positive line bundle on $\tv$. Hence, $\tv$ admits a Hodge metric $\omega$. 

    The relative compactness of $\tx$ implies that there exists a positive constant $c_0>0$ such that $\iO{\cal{L}_\delta,h_{\cal{L}_\delta}}\geq 2c_0\omega$ on $\tx$. 
    Since $X$ is weakly pseudoconvex, there exists a smooth exhaustive plurisubharmonic function $\varPsi$ on $X$, and the pullback $\tl{\varPsi}:=\tl{\pi}^*\varPsi$ is again a smooth exhaustive plurisubharmonic function on $\tx$. Therefore, $\tx$ is also weakly pseudoconvex. 
    Let $\chi\in\cal{C}^{\infty}(\bb{R},\bb{R})$ be a smooth increasing convex function satisfying $\int^{+\infty}_0\sqrt{\chi''(t)}dt=+\infty$. Then, the \kah metric $c_0\omega+\idd\chi\circ\tl{\varPsi}$ is complete on $\tx$ (see \cite[Chapter VIII, $\S$5]{Dem-book}).  
    Let $\sigma_j$ be the defining section of $E_j$. The induced smooth Hermitian metric on $\cal{L}$ over $\tx\setminus E$ is defined by 
    \begin{align*}
        h^{\cal{L}}_{\delta,\varepsilon,\alpha}:=h_{\cal{L}}\, e^{-\chi\circ\tl{\varPsi}}\prod_{j\in J}|\sigma_j|^{2(1-\delta_j)}_{h_{E_j}}\big(\log(\varepsilon|\sigma_j|^2_{h_{E_j}})\big)^{2\alpha},
    \end{align*}
    where $\alpha>0$ is a sufficiently large constant. A straightforward computation shows that
    \begin{align*}
        \iO{\cal{L},\hldea}=&\,\iO{\cal{L},h_\cal{L}}+i\sum_{j\in J}(1-\delta_j)\Theta_{\cal{O}_{\tx}(E_j),h_{E_j}}+\idd\chi\circ\tl{\varPsi}\\
        &+i\sum_{j\in J}\frac{2\alpha\Theta_{\cal{O}_{\tx}(E_j),h_{E_j}}}{\log(\varepsilon|\sigma_j|^2_{h_{E_j}})}+i\sum_{j\in J}\frac{2\alpha\partial\log|\sigma_j|^2_{h_{E_j}}\wedge\overline{\partial}\log|\sigma_j|^2_{h_{E_j}}}{\big(\log(\varepsilon|\sigma_j|^2_{h_{E_j}})\big)^2}
    \end{align*}
    on $\tx\setminus E$. For the fixed constant $\alpha$, by choosing $\varepsilon>0$ sufficiently small, we obtain 
    \begin{align*}
        -c_0\omega\leq i\sum_{j\in J}\frac{2\alpha\Theta_{\cal{O}_{\tx}(E_j),h_{E_j}}}{\log(\varepsilon|\sigma_j|^2_{h_{E_j}})}\leq c_0\omega
    \end{align*}
    on $\tx$. Note that the constant $\varepsilon$ is thus fixed, and the choice of $\varepsilon$ depends on $\alpha$. 
    By the following curvature inequality 
    \begin{align*}
        \iO{\cal{L},\hldea}\geq\iO{\cal{L}_\delta,h_{\cal{L}_\delta}}-c_0\omega+i\sum_{j\in J}\frac{2\alpha\partial\log|\sigma_j|^2_{h_{E_j}}\wedge\overline{\partial}\log|\sigma_j|^2_{h_{E_j}}}{\big(\log(\varepsilon|\sigma_j|^2_{h_{E_j}})\big)^2}+\idd\chi\circ\tl{\varPsi}\geq c_0\omega,
    \end{align*}
    the curvature form $\iO{\cal{L},\hldea}$ is a \kah metric on $\tx\setminus E$. 
    Setting $\omega_{\tx\setminus E}:=\iO{\cal{L},\hldea}$ for simplicity, we see that the \kah metric $\omega_{\tx\setminus E}$ is of Poincar\'{e}-type along $E$ (see \cite{Zuc79,HLWY23}), and together with the $\idd\chi\circ\tl{\varPsi}$-term, it becomes complete on $\tx\setminus E$. 
    For sufficiently large $\alpha$, Theorem \ref{Theorem: Logarithmic L2-Dolbeault resolusion} yields the cohomological isomorphism 
    \begin{align*}
        H^q(\tx,\Omega^p_{\tx}(\log E)\otimes\cal{L})\cong H^q\Big(\Gamma\big(\tx\setminus E,\scr{L}^{p,\ast}_{\cal{L},\hldea,\,\omega_{\tx\setminus E}}\big)\Big).
    \end{align*}
    
    A direct computation yields the commutator identity 
    \begin{align*}
        \Bigl[\iO{\cal{L},\hldea},\Lambda_{\omega_{\tx\setminus E}}\Bigr]=(p+q-n)\,\rom{id}_{\cal{L}}
    \end{align*}
    on $\bigwedge^{p,q}T^*_{\tx\setminus E}\otimes\cal{L}$.
    For any $f\in \Gamma\big(\tx\setminus E,\scr{L}^{p,q}_{\cal{L},\hldea,\,\omega_{\tx\setminus E}}\big)$ satisfying $\dbar f=0$, the relative compactness of $\tx\setminus E$ implies that 
    \begin{align*}
        ||f||^2_{\tx\setminus E}:=\int_{\tx\setminus E}|f|^2_{\hldea,\,\omega_{\tx\setminus E}}dV_{\omega_{\tx\setminus E}}<+\infty, 
    \end{align*}
    that is, $f\in L^2_{p,q}(\tx\setminus E,\cal{L};\omega_{\tx\setminus E},\hldea)$. Since $\omega_{\tx\setminus E}$ is complete on $\tx\setminus E$, it follows from H\"{o}rmander's $L^2$-estimate (see \cite[Chapter VIII, Theorem 4.5]{Dem-book}) 
    that there exists a solution $u\in L^2_{p,q-1}(\tx\setminus E,\cal{L};\omega_{\tx\setminus E},\hldea)$ satisfying $\dbar u=f$ on $\tx\setminus E$ and $(p+q-n)||u||^2_{\tx\setminus E}\leq||f||^2_{\tx\setminus E}$. 
    In particular, $u\in \Gamma\big(\tx\setminus E,\scr{L}^{p,q-1}_{\cal{L},\hldea,\,\omega_{\tx\setminus E}}\big)$, and hence we obtain the cohomology vanishing 
    \begin{align*}
        H^q(\tx,\,&\Omega^p_{\tx}(\log E)\otimes\cal{O}_{\tx}(-E)\otimes\tl{\pi}^*L\otimes\scr{I}(\tl{\pi}^*\hbar))\\
        &=H^q(\tx,\Omega^p_{\tx}(\log E)\otimes\cal{L})\cong H^q\Big(\Gamma\big(\tx\setminus E,\scr{L}^{p,\ast}_{\cal{L},\hldea,\,\omega_{\tx\setminus E}}\big)\Big)=0,
    \end{align*}
    for any $p+q>n$. 
\end{proof}

Note that the completeness of $\omega_{\tx\setminus E}$ on $\tx\setminus E$ is essential, since H\"{o}rmander's $L^2$-estimate for $(p,q)$-forms cannot be applied without this assumption. 
In Theorem \ref{Theorem: Steenbrink vanishing for singular positive in not Introduction}, if $Y$ is nonsingular, then $\mu$ can be taken to be the identity map and $\wh{Y}=Y$. 
The same holds for Theorems \ref{Theorem: Steenbrink vanishing for big in Introduction} and \ref{Theorem: Steenbrink vanishing for nef and big}. 
Furthermore, if $Y$ is compact, we may take $Y=X$, and obtain the same statement simply for big line bundles on $X$ as follows.

\begin{corollary}\label{Corollary: Steenbrink-type vanishing on proj mfd}
    Let $X$ be a compact complex manifold and $L\longrightarrow X$ be a holomorphic line bundle. 
    If $L$ is big, that is, if $L$ admits a singular Hermitian metric $h$ with algebraic singularities and a strictly positive curvature current on $X$, then $X$ is Moishezon, 
    and for a log resolusion $\pi:\tx\longrightarrow X$ of the singular locus $Z$ of $h$, the following Steenbrink-type vanishing holds:
    \begin{align*}
        H^q(\tx,\Omega^p_{\tx}(\log E)\otimes\cal{O}_{\tx}(-E)\otimes\pi^*L\otimes\scr{I}(\pi^*h))=0, 
    \end{align*}
    for any $p+q>n$, where $E:=\pi^{-1}(Z)_{red}$ is the simple normal crossing divisor.  

    Furthermore, under the additional assumption that $L$ is nef, the singular Hermitian metric $h$ can be chosen such that $\scr{I}(\pi^*h)=\cal{O}_{\tx}$.
\end{corollary}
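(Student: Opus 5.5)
The plan is to reduce the corollary to Theorem \ref{Theorem: Steenbrink vanishing for singular positive in not Introduction} by taking $Y=X$ and $\mu=\mathrm{id}$, so that $\wh{Y}=Y=X$ and $\exc(\mu)=\emptyset$. The Moishezon property is the standard fact recalled in the preliminaries: a big line bundle forces maximal algebraic dimension \cite{Moi66}.

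For the singular Hermitian metric, I would invoke Demailly's characterization \cite{Dem90}, \cite[Theorem 2.3.30]{MM07}. Bigness of $L$ gives a singular Hermitian metric with a strictly positive curvature current. Theorem \ref{Theorem: Blow-ups and Q-line bundles}, applied with $V=X$ (possible because $X$ is compact), then replaces it by a metric with algebraic singularities. This metric keeps the same multiplier ideal sheaf and still satisfies $\iO{L,h}\geq\varepsilon\omega/2$.

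Since $X$ is compact, it is trivially a relatively compact weakly pseudoconvex open subset of itself: any constant is a smooth exhaustion psh function, and all sublevel sets are relatively compact. So we may take $V=X$.

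The one point to check is that Theorem \ref{Theorem: Steenbrink vanishing for singular positive in not Introduction} lets us use the given $h$ itself rather than a new metric $\hbar$. When $h$ already has algebraic singularities, the construction in that proof can be run with $\wh{\cal{H}}=h$:
\begin{itemize}
\item no correction $\psi$ is needed when $\mu=\mathrm{id}$;
\item the singular locus $Z$ is exactly that of $h$;
\item Theorem \ref{Theorem: Blow-ups and Q-line bundles}$(\beta)$ is applied directly to $\pi^*h$ on the log resolution $\pi$ of $Z$.
\end{itemize}
This yields rational $\delta_j\in[0,1)$ making $\pi^*L\otimes\scr{I}(\pi^*h)\otimes\cal{O}_{\tx}(-\sum\delta_jE_j)$ positive. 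From that point the $L^2$ argument goes through verbatim and gives the vanishing for $p+q>n$:
\begin{itemize}
\item the Poincar\'e-type metric $h^{\cal{L}}_{\delta,\varepsilon,\alpha}$, where the $\chi\circ\tl{\varPsi}$ term can be dropped because $\tx$ is compact and $c_0\omega$ is already complete;
\item the logarithmic $L^2$-Dolbeault isomorphism of Theorem \ref{Theorem: Logarithmic L2-Dolbeault resolusion};
\item H\"ormander's estimate with the commutator $(p+q-n)$.
\end{itemize}
The main care point is checking that the proof of Theorem \ref{Theorem: Blow-ups and Q-line bundles}$(\alpha)$,$(\beta)$ only uses the algebraic-singularity form of the metric. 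It does, since the Demailly approximation step there is only used to produce such a form.

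For the nef addendum, I would use Theorem \ref{Theorem: positivity of Q-line bundles for nef and big}. It produces $h_{nb}$ with algebraic singularities, a strictly positive curvature current, and $\scr{I}(h_{nb})=\cal{O}_X$. Moreover, every divisorial Lelong number satisfies $\upsilon_{E_j}(h_{nb})<1$, so $\scr{I}(\pi^*h_{nb})=\cal{O}_{\tx}$ by $(\alpha)$. Choosing $h=h_{nb}$ in the first part then gives the vanishing without the multiplier ideal.
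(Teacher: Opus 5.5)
Your proposal is correct and follows the paper's route. The paper obtains the corollary by specializing Theorem~\ref{Theorem: Steenbrink vanishing for singular positive in not Introduction} to $Y=X$, $\mu=\mathrm{id}$ (so $\exc(\mu)=\emptyset$ and compactness makes the weak pseudoconvexity hypothesis trivial), and gets the nef addendum from Theorem~\ref{Theorem: positivity of Q-line bundles for nef and big}; your explicit check that the $L^2$ argument runs directly with the given algebraic-singularity metric $h$, rather than the constructed $\hbar$, spells out a step the paper leaves implicit.
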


When $X$ is projective, the situation is somewhat simpler. Without using Demailly's approximation, we can directly construct a singular Hermitian metric $h$ with algebraic singularities and a strictly positive curvature current by Kodaira's lemma, which states that there exist a positive integer $m\in N$, an ample line bundle $A$, and an effective divisor $D$ such that $L^{\otimes m}\cong A\otimes\cal{O}_X(D)$. 
In fact, the required conditions are satisfied by taking a singular Hermitian metric $h$ on $L$ as $h:=h_A^{1/m}h_D^{1/m}$, where $h_A$ is a smooth Hermitian metric on $A$ with positive curvature and $h_D$ is the natural singular Hermitian metric $\cal{O}_X(D)$ induced by $D$ (see \cite{Dem90}).
Furthermore, by taking a finite sequence of blow-ups $\pi:\tx\longrightarrow X$ along the singular locus of $D$ such that $\pi^{-1}(D)$ is a simple normal crossing divisor, Theorem \ref{Theorem: positivity of Q-line bundle in Introduction} and Corollary \ref{Corollary: Steenbrink-type vanishing on proj mfd} follow in the same way.

Although Theorem \ref{Theorem: Steenbrink vanishing for big in Introduction} can also be proved in the same way as Theorem \ref{Theorem: Steenbrink vanishing for singular positive in not Introduction}, we give a shorter proof using Theorem \ref{Theorem: Blow-ups and Q-line bundles} and \cite[Theorem 1.1]{HLWY23}, since $X$ is compact.

\begin{proof}[Proof of Theorem \ref{Theorem: Steenbrink vanishing for big in Introduction}]
    The existence of a big line bundle on $X$ implies that $X$ is Moishezon (see \cite{Moi66}).
    Since bigness is a birational invariant, $\mu^*L$ is also big. By Demailly's characterization (see \cite[Theorem 2.3.30]{MM07}, \cite{Dem90}), $\mu^*L$ admits a singular Hermitian metric $\cal{H}$ with a strictly positive curvature current, that is, $\cal{H}$ is singular positive on $\hx$. 
    Applying Demailly's approximation theorem (see Theorem \ref{Theorem: Demailly approximation with alg sing and ideal sheaves}, \cite{DPS01}) to $\cal{H}$, we obtain a singular Hermitian metric $\wh{\cal{H}}$ on $\mu^*L$ with algebraic singularities, and $\wh{\cal{H}}$ is singular positive on $\hx$. 
    Let $Z$ be the analytic subset of $\hx$ given by the singular locus of $\wh{\cal{H}}$ and $\pi:\tx\longrightarrow\hx$ be a log resolusion of $Z$ such that $E=\sum_{j\in J}E_j:=\pi^{-1}(Z)_{red}$ is a simple normal crossing divisor. 
    Then, it follows from Theorem \ref{Theorem: Blow-ups and Q-line bundles} that there exist nonnegative rational numbers $\delta_j\in [\upsilon_{E_j}(\wh{\cal{H}})-\lfloor\upsilon_{E_j}(\wh{\cal{H}})\rfloor,1)\cap\bb{Q}\subset [0,1)$, such that 
    \begin{align*}
        \cal{L}_\delta:=\tl{\pi}^*L\otimes\scr{I}(\pi^*\wh{\cal{H}})\otimes\cal{O}_{\tx}\Big(-\sum_{j\in J}\delta_jE_j\Big)=\tl{\pi}^*L\otimes\cal{O}_{\tx}\Big(-\sum_{j\in J}(\lfloor\upsilon_{E_j}(\wh{\cal{H}})\rfloor+\delta_j)E_j\Big)
    \end{align*}
    is positive on $\tx$. Defining a singular Hermitian metric $h$ on $L$ in the same way as $\hbar$ in the proof of Theorem \ref{Theorem: Steenbrink vanishing for singular positive in not Introduction}, we see that $h$ satisfies the assumptions of the theorem and 
    \begin{align*}
        \cal{L}_\delta=\cal{O}_{\tx}(-E)\otimes\tl{\pi}^*L\otimes\scr{I}(\tl{\pi}^*h)\otimes\cal{O}_{\tx}\Big(\sum_{j\in J}(1-\delta_j)E_j\Big)
    \end{align*}
    holds, where $\scr{I}(\pi^*\wh{\cal{H}})=\scr{I}(\tl{\pi}^*h)$ and $1-\delta_j\in (0,1]$. 
    Finally, applying \cite[Theorem 1.1]{HLWY23} to the positivity of $\cal{L}_\delta$, we obtain the desired cohomology vanishing.
\end{proof}

Theorem \ref{Theorem: Steenbrink vanishing for nef and big} follows by applying Theorem \ref{Theorem: positivity of Q-line bundles for nef and big} and arguing as in the proof above. 

\begin{remark}\label{Remark: non-vanishing result}
    Theorem \ref{Theorem: logarithmic vanishing for big line bundle in [Wat26a]} cannot be extended to bidegrees $(p,q)$ with $p+q>\rom{dim}\,X$. 
    The following counterexample shows this.
\end{remark}

\begin{counterexample}
    Let $n>3$, and let $\pi:\tl{\bb{P}}^n\longrightarrow\bb{P}^n$ be the blow-up of $\bb{P}^n$ at a point $x_0$. Clearly the line bundle $\pi^*\cal{O}_{\bb{P}^n}(1)$ is nef and big.  
    Let $D$ be the strict transform of a hyperplane in $\bb{P}^n$ not passing through $x_0$. 
    For any integer $m\geq 2$, we obtain the following non-vanishing result: 
    \begin{align*}
        H^{n-1}(\tl{\bb{P}}^n,\Omega^{n-1}_{\tl{\bb{P}}^n}(\log D)\otimes\cal{O}_{\tl{\bb{P}}^n}(-D)\otimes\pi^*\cal{O}_{\bb{P}^n}(m))\ne0,
    \end{align*}
    where $D$ is a smooth divisor on $\tl{\bb{P}}^n$ with $\cal{O}_{\tl{\bb{P}}^n}(D)\cong \pi^*\cal{O}_{\bb{P}^n}(1)$, and $\pi^*\cal{O}_{\bb{P}^n}(1)|_D$ is ample. 
\end{counterexample}

\begin{proof}
    The case $m=2$ follows immediately from \cite[Counterexample 4.12]{Wat26a}, which is derived from Ramanujam's counterexample (see \cite{Ram72}, \cite[Chapter VII]{Dem-book}), and the case $m\geq3$ is obtained in the same way.
\end{proof}

Finally, by applying the Negativity Lemma (see \cite{Kaw24}, \cite[Lemma 2.2]{Wat25b}) and \cite[Theorem 1.1]{HLWY23}, we give a new short analytic proof of the Steenbrink vanishing.

\begin{proof}[Short proof of Theorem \ref{Theorem: Steenbrink vanishing [Ste85]} via analytic methods]
    Let $E=\sum_{j\in J}E_j:=\mu^{-1}(\Sigma)$ be the simple normal crossing divisor on $\tx$. 
    As in the proof of Theorem \ref{Theorem: Blow-ups and Q-line bundles}, by applying the Negativity Lemma (see \cite[Remark 1.6.2 (2)]{Kaw24}, \cite[Lemma 2.2]{Wat25b}), there exist nonnegative numbers $\delta_j\in[0,1)\cap\bb{Q}$ such that the $\bb{Q}$-line bundle 
    \begin{align*}
        \cal{A}_\delta:=\mu^*A\otimes\cal{O}_{\tx}\Big(-\sum_{j\in J}\delta_jE_j\Big)
    \end{align*}
    is positive on $\tx$. 
    Finally, applying \cite[Theorem 1.1]{HLWY23} to the positivity of $\cal{A}_\delta=\cal{O}_{\tx}(-E)\otimes\mu^*A\otimes\cal{O}_{\tx}\big(\sum_{j\in J}(1-\delta_j)E_j\big)$, we obtain the desired cohomology vanishing. 

    To prove the vanishing of higher direct image sheaves, it is enough to show that $H^q(\mu^{-1}(U),\Omega^p_{\tx}(\log E)\otimes\cal{O}_{\tx}(-E))=0$ for some sufficiently small Stein open neighborhood $U$ of each point $x\in X$ and any $p+q>n$. 
    Using a smooth strictly plurisubharmonic function as a weight, the trivial line bundle $\cal{O}_X$ becomes positive on $U$, and hence the vanishing follows from the same argument as in the proof of Theorem \ref{Theorem: Steenbrink vanishing for singular positive in not Introduction} and the above discussion. 
    Here, although $\mu^{-1}(U)$ is not necessarily Stein, it is weakly pseudoconvex, and we use the fact that $\mu^*\cal{O}_X\cong\cal{O}_{\tx}$.
\end{proof}


\vspace*{3mm}

\noindent
{\bf Acknowledgement.} 
The author is supported by Grant-in-Aid for Research Activity Start-up $\sharp$26K16989 from the Japan Society for the Promotion of Science (JSPS).







\end{document}